  \theoremstyle{plain}
  \newtheorem{Theorem}{Theorem}[section]
  \newtheorem{Lemma}{Lemma}[section]
  \newtheorem{Corollary}{Corollary}[section]
    \theoremstyle{remark}
  \newtheorem{remark}{Remark}
  \numberwithin{equation}{section}
  \numberwithin{figure}{section}
  \numberwithin{remark}{section}
\begin{document}

\title{Interior $C^{1,1}$ regularity of solutions to degenerate Monge-Amp\`{e}re type equations}

\author{Feida Jiang}
\address{College of Mathematics and Statistics, Nanjing University of Information Science and Technology, Nanjing 210044, P.R. China}
\email{jfd2001@163.com}
\author{JuHua Shi}
\address{School of Science, Nanjing University of Science and Technology, Nanjing 210094, P.R. China}
\email{ashijuhua@163.com}
\author{Xiao-Ping Yang$^*$}
\address{Department of Mathematics, Nanjing University, Nanjing 210093, P.R. China}
\email{xpyang@nju.edu.cn}

\thanks{This work was supported by the National Natural Science Foundation of China (No. 11771214, 11531005).}

\subjclass[2010]{35J96, 35J70, 35J60}

\date{\today}
\thanks{*corresponding author}

\keywords{Degenerate Monge-Amp\`ere type equations;  interior regularity; interior second derivative estimate; Pogorelov type estimate.}

\begin{abstract}
In this paper, we study the interior $C^{1,1}$ regularity of viscosity solutions for a degenerate Monge-Amp\`{e}re type equation $\det[D^{2}u-A(x, u, Du)]=B(x, u, Du)$ when $B \geq 0$ and $B^{\frac{1}{n-1}}\in C^{1,1}(\bar{\Omega}\times\mathbb{R}\times \mathbb{R}^n)$. We prove that $u\in C^{1,1}(\Omega)$ under the A3 condition and A3w$^+$ condition respectively. In the former case, we construct a suitable auxiliary function to obtain uniform {\it a priori} estimates directly. In the latter case,  the main argument is to establish the Pogorelov type estimates, which are interesting independently.

\end{abstract}

\maketitle
\section{ Introduction}

In this paper, we shall study the following degenerate Monge-Amp\`{e}re type equation (DMATE)
\begin{equation}\label{EQUATION}
\det[D^{2}u-A(\cdot, u, Du)]=B(\cdot,u, Du),\quad {\rm in}\ \Omega,
\end{equation}
where $\Omega$ is a bounded domain, $Du$ and $D^2u$ denote the gradient and Hessian matrix of second order derivatives of the unknown function $u: \Omega\rightarrow \mathbb{R}$ respectively, $A: \Omega\times\mathbb{R}\times\mathbb{R}^n\rightarrow \mathbb{R}^{n\times n}$ is a symmetric $n \times n$ matrix valued function and $A\in C^{2,1}(\bar{\Omega}\times\mathbb{R}\times \mathbb{R}^{n}, \mathbb{R}^{n\times n})$, $B: \Omega\times\mathbb{R}\times\mathbb{R}^n\rightarrow \mathbb{R}^+\cup \{0\}$ is a nonnegative scalar function and $B^{\frac{1}{n-1}}\in C^{1,1}(\bar{\Omega}\times\mathbb{R}\times\mathbb{R}^n)$. We shall use $x, z$ and $p$ to denote the points in $\Omega, \mathbb{R}$ and $\mathbb{R}^n$, respectively.

We say that $A$ is  strictly regular in $\Omega$, if
\begin{equation}\label{A3}
\sum_{i,j,k,l=1}^n D^{2}_{p_kp_l}A_{ij}(x, z, p)\xi_i\xi_j\eta_k\eta_l\geq c_0|\xi|^2|\eta|^2,
\end{equation}
holds for all $(x,z,p)\in \Omega \times \mathbb{R} \times \mathbb{R}^n$, $\xi,\eta\in \mathbb{R}^n$ with $\xi\cdot\eta=0$, and some positive constant $c_0$. If $c_0$ on the right hand side in \eqref{A3} is replaced by $0$, we say that $A$ is regular in $\Omega$. As usual, the strictly regular condition and regular condition are also said to be the A3 condition and the A3w condition, respectively, see \cite{MTW, Loeper}. If \eqref{A3} holds for $c_0=0$ without the restriction $\xi \cdot \eta=0$, we call \eqref{A3} the regular condition without orthogonality or the A3w condition without orthogonality. We introduce a particular form of A3w condition, namely
\begin{equation}\label{A3w+}
\sum_{i,j,k,l=1}^n D^{2}_{p_kp_l}A_{ij}(x, z, p)\xi_i\xi_j\eta_k\eta_l\geq \mu_0(\xi \cdot \eta)^2,
\end{equation}
holds for all $(x,z,p)\in \Omega \times \mathbb{R} \times \mathbb{R}^n$, $\xi,\eta\in \mathbb{R}^n$, and some constant $\mu_0$. We call \eqref{A3w+} the A3w$^+$ condition. 
It is obvious that the A3w$^+$ condition implies the A3w condition. The A3w condition without orthogonality implies the A3w$^+$ condition when $\mu_0\le 0$.

The aim of this paper is to investigate interior regularity of solutions to the degenerate equation \eqref{EQUATION}. 
It is well known that the Pogorelov estimate plays an important role in establishing interior regularity of solutions to Monge-Amp\`{e}re equations. 
When $A\equiv 0$, the equation \eqref{EQUATION} reduces to the classical Monge-Amp\`{e}re equation. For the case $B\geq B_0 > 0$ with a constant $B_0$, the Pogorelov estimate for the equation \eqref{EQUATION} together with the homogeneous Dirichlet boundary condition $u=0$ on $\partial\Omega$ was first proved by Pogorelov \cite{Pogorelov}. Various versions of Pogorelov estimates for nondegenerate Monge-Amp\`ere equations can be found in \cite{GT, Figalli, Gutierrez, Wangxujia}. For the case $B > 0$, Blocki \cite{Blocki} proved
\begin{equation}\label{Blocki Pog}
(w-u)^{\alpha}|D^2u|\leq C, \quad {\rm in} \ \Omega,
\end{equation}
where $\alpha=n-1$ if $n \geq 3$ and $\alpha  > 1$ if $n=2$, $w\in C^2(\Omega)$ is convex satisfying $u\le w$ in $\Omega$ and $\lim\limits_{x\rightarrow \partial\Omega}(w(x)-u(x))=0$, and the constant $C$ is independent of the lower bound of $B$.
When $A \not \equiv 0$, the Monge-Amp\`{e}re type equations \eqref{EQUATION} arise in various aspects such as optimal mass transportation problems, geometric optics and conformal geometry etc (see, for instance \cite{MTW, JT-Existence, JT-Pog, Trudinger2006}). The Pogorelov type estimates of non-degenerate Monge-Amp\`{e}re type equations were established under the assumptions of A3w and $A$-boundedness conditions in \cite{LT-Pog, LTW}. Without the $A$-boundedness condition, the interior second order derivative estimates of Pogorelov type were also shown to be valid in \cite{JT-Pog} by constructing a different barrier function with the help of an admissible function. In the optimal mass transportation setting, interior $C^2$ regularity for non-degenerate Monge-Am\`{e}pre type equations was obtained under the A3 condition in \cite{MTW}.

In this paper, we investigate the interior regularity of a viscosity solution $u$ to the degenerate Monge-Amp\`{e}re type equation \eqref{EQUATION}.  By constructing a suitable auxiliary function to directly obtain uniform {\it a priori} estimates of second order derivatives, we first prove that $u \in C^{1,1}(\Omega)$ under the A3 condition. Then we relax the A3 condition to the A3w$^+$ condition, by assuming some suitable additional conditions, we establish the Pogorelov type estimates, which are independently interesting, and further show that the solution $u$ has interior $C^{1,1}$ regularity.

More precisely, we have the following main results.

\begin{Theorem}\label{Th1.1}
Let $u \in C^4(\Omega)\cap C^{1,1}(\bar{\Omega})$ be a solution of the equation \eqref{EQUATION} in a bounded domain $\Omega\subset \mathbb{R}^n$, where $B$ is a positive function and $B^{\frac{1}{n-1}}\in C^{1,1}(\bar{\Omega}\times\mathbb{R}\times \mathbb{R}^n)$. Assume that 
\begin{equation}\label{DppB}
 D_{pp}\tilde B \ge -C_BI,
\end{equation}
for some nonnegative constant $C_B$, where $I$ is the $n\times n$ identity matrix and $\tilde B=\log B$. Assume that $A\in C^2(\bar{\Omega}\times\mathbb{R}\times\mathbb{R}^n, \mathbb{R}^{n\times n})$ is strictly regular. Then, we have
\begin{equation}\label{second order derivatives estimates}
|D^2u(x)|\leq C,
\end{equation}
where $C$ depends on $n$, ${\rm dist}(x,\partial \Omega)$, $\sup\limits_{\Omega}|Du|$, $\|B^{\frac{1}{n-1}}\|_{C^{1,1}}$, $\|A\|_{C^2}$ and $c_0$.
\end{Theorem}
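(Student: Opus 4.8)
The plan is to prove \eqref{second order derivatives estimates} by applying the maximum principle to a Pogorelov-type auxiliary function localized by a cutoff. Write $w[u]=D^2u-A(\cdot,u,Du)$, which is positive definite since $u$ solves \eqref{EQUATION}, let $\{w^{ij}\}$ be its inverse matrix, $L:=w^{ij}D_{ij}$ the linearized operator, and $\mathcal T:=\sum_i w^{ii}$. Fix a ball $B_{2R}=B_{2R}(x_0)\Subset\Omega$; since $D^2u=w[u]+A$ and $\|A\|_{C^0}$ is controlled, it suffices to bound $w_{\xi\xi}(x)$ for $x\in B_R$ and $\xi\in\mathbb S^{n-1}$. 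Choose $\eta\in C^\infty_c(B_{2R})$ with $\eta\equiv1$ on $B_R$, $0\le\eta\le1$, $|D\eta|^2\le C_0\eta$, and set
\begin{equation*}
G(x,\xi)=2\log\eta(x)+\log w_{\xi\xi}(x)+\tfrac{\kappa}{2}|Du(x)|^2 ,\qquad \xi\in\mathbb S^{n-1},
\end{equation*}
with $\kappa>0$ large, to be fixed. Since $G\to-\infty$ near $\partial B_{2R}$, $G$ attains its maximum over $\overline{B_{2R}}\times\mathbb S^{n-1}$ at an interior point $(x_1,\xi_1)$. After a rotation we may take $\xi_1=e_1$ and $w_{ij}(x_1)$ diagonal; freezing the direction $\xi_1$, the function $x\mapsto 2\log\eta+\log w_{\xi_1\xi_1}+\tfrac\kappa2|Du|^2$ has an interior maximum at $x_1$, and maximality in $\xi$ makes $w_{11}(x_1)$ the largest eigenvalue of $w[u](x_1)$. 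We may assume $w_{11}(x_1)$ is large and work towards a contradiction, using at $x_1$ the relations $DG=0$ and $L(G)\le0$.

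Next I would differentiate the equation. Taking logarithms in \eqref{EQUATION} gives $\log\det w[u]=\tilde B$; differentiating once and twice in the $e_1$-direction and using $\partial_{w_{ij}}\log\det=w^{ij}$ yields $w^{ij}D_1w_{ij}=D_1\tilde B$ and $w^{ij}D_{11}w_{ij}-w^{ik}w^{jl}D_1w_{ij}D_1w_{kl}=D_{11}\tilde B$. Commuting derivatives in $w_{11}=D_{11}u-A_{11}(\cdot,u,Du)$ and contracting with $w^{ij}$ then gives an identity of the form
\begin{equation*}
L(w_{11})=D_{11}\tilde B+w^{ik}w^{jl}D_1w_{ij}D_1w_{kl}+w^{ij}D^2_{p_kp_l}A_{ij}\,u_{k1}u_{l1}-w^{ij}D^2_{p_kp_l}A_{11}\,u_{ki}u_{lj}+\mathcal E ,
\end{equation*}
where $\mathcal E$ gathers the remaining terms coming from the $x,z,p$-dependence of $A$, in particular the third-order terms $w^{ij}D_{p_l}A_{ij}\,u_{l11}$ and $w^{ij}D_{p_l}A_{11}\,u_{lij}$.

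The conceptual core is the lower bound on the A3 term. With $w[u](x_1)$ diagonal and $e_1$ its top eigendirection, I would decompose for each $i$ the vector $(u_{11},\dots,u_{n1})$ into its parts parallel and orthogonal to $e_i$, apply strict regularity \eqref{A3} to the orthogonal part, and note that since $w$ is diagonal the parallel contributions are lower order; this gives $w^{ij}D^2_{p_kp_l}A_{ij}\,u_{k1}u_{l1}\ge c\,w_{11}^2\sum_{i\ne1}w^{ii}-C(\mathcal T+w_{11})$ with $c=c(c_0,\|A\|_{C^2})>0$. Applying the arithmetic–geometric mean inequality to $\prod_{i\ne1}w_{ii}=B/w_{11}$ gives $\sum_{i\ne1}w^{ii}\ge(n-1)(w_{11}/B)^{1/(n-1)}\ge(n-1)\,w_{11}^{1/(n-1)}\|B^{1/(n-1)}\|_{C^0}^{-1}$ — this is exactly where $B^{1/(n-1)}$ and the exponent $n-1$ enter, and why the final constant is independent of $\inf_\Omega B$ — so that
\begin{equation*}
w^{ij}D^2_{p_kp_l}A_{ij}\,u_{k1}u_{l1}\ \ge\ \frac{c\,(n-1)}{\|B^{1/(n-1)}\|_{C^0}}\,w_{11}^{\,2+\frac{1}{n-1}}-C(\mathcal T+w_{11}) .
\end{equation*}
For $D_{11}\tilde B$, expanding by the chain rule, the only dangerous contribution is the $pp$-block $D^2_{p_kp_l}\tilde B\,u_{k1}u_{l1}$, which by \eqref{DppB} is $\ge-C_B\sum_k u_{k1}^2\ge-C_Bw_{11}^2-C$; the other derivatives of $\tilde B$ appear with factors $O(1+w_{11})$, and together with $\mathcal E$, the term $-w^{ij}D^2_{p_kp_l}A_{11}u_{ki}u_{lj}$, the cutoff term $L(2\log\eta)$, and $-|Dw_{11}|^2_{w}/w_{11}^2$, they are shown to be subordinate to the superlinear term above (with $\kappa$ chosen large to absorb the $\mathcal T$-weighted pieces through the positive part $w^{ij}u_{ki}u_{kj}$ of $\tfrac\kappa2 L(|Du|^2)$). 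Feeding everything into $0\ge L(G)(x_1)$ and dividing by $w_{11}$ gives
\begin{equation*}
0\ \ge\ \frac{c\,(n-1)}{\|B^{1/(n-1)}\|_{C^0}}\,w_{11}^{\,1+\frac1{n-1}}-C_*\,w_{11}-C_* ,
\end{equation*}
with $C_*$ depending only on $n,\operatorname{dist}(x,\partial\Omega),\sup_\Omega|Du|,\|B^{1/(n-1)}\|_{C^{1,1}},\|A\|_{C^2},c_0$; since $1+\tfrac1{n-1}>1$, this forces $w_{11}(x_1)\le C$, which by the reduction above yields \eqref{second order derivatives estimates}.

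The step I expect to be the main obstacle is the control of $\mathcal E$ (and of the non-$pp$ part of $D_{11}\tilde B$): because $A=A(x,u,Du)$, the third-order terms $w^{ij}D_{p_l}A_{ij}u_{l11}$ and $w^{ij}D_{p_l}A_{11}u_{lij}$ do not close on their own and must be paired carefully. The symmetrization identity $u_{11l}=D_1w_{1l}+D_1[A_{1l}]$, the critical-point relation $D_lw_{11}=-w_{11}(2D_l\eta/\eta+\kappa u_mu_{ml})$, Cauchy–Schwarz against the nonnegative term $w^{ik}w^{jl}D_1w_{ij}D_1w_{kl}$, and the once-differentiated equation $w^{ij}D_1w_{ij}=D_1\tilde B$ must be combined to show that after all cancellations the residual is no worse than $O(w_{11}^2)$ plus manifestly absorbable pieces, so that the superlinear A3 term still wins. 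Getting the order of constants right — first $\kappa$ large in terms of $\|A\|_{C^2}$ and $c_0$, then a threshold on $w_{11}$ — is what makes the argument close, and is also where strict regularity (rather than mere regularity) is essential.
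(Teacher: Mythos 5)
Your overall architecture is the same as the paper's: maximize $2\log\eta+\log\tilde u_{\xi\xi}$, differentiate the equation once and twice, use Lemma \ref{Lemma 1}/Corollary \ref{Coro2.1} to trade derivatives of $\tilde B$ for negative powers $B^{-\frac{1}{n-1}}$, and let the A3 term do the work. But there is a genuine gap in how you close the maximum-principle inequality, and it sits exactly at the point where independence of $\inf_\Omega B$ must be preserved. After applying AM--GM you replace the A3 contribution $c\,w_{11}^2\sum_{i\neq 1}w^{ii}$ by the pure power $c\,w_{11}^{2+\frac{1}{n-1}}$, and then assert that the cutoff term $L(2\log\eta)\ge -C\,\mathcal T/\eta^2$ (and the $-C\,\mathcal T w_{11}$ error produced when expanding $u_{k1}u_{l1}=(\tilde u_{k1}+A_{k1})(\tilde u_{l1}+A_{l1})$ with $\{\tilde u_{ij}\}$ diagonal) is ``subordinate'' to that power. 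It is not: $\mathcal T=\sum_i \tilde u^{ii}$ contains $1/w_{nn}$, and since $\prod_i w_{ii}=B$ may be arbitrarily small, $\mathcal T$ is not controlled by any power of $w_{11}$ with constants independent of $\inf B$. The proposed rescue via $\tfrac{\kappa}{2}L(|Du|^2)$ also fails: its positive part $\kappa w^{ij}u_{ki}u_{kj}$ yields $\kappa\sum_i w_{ii}$ up to errors that themselves involve $+C\kappa\,\mathcal T$, so it produces terms linear in $w_{11}$, not a positive multiple of $\mathcal T/\eta^2$. Your final inequality $0\ge c\,w_{11}^{1+\frac{1}{n-1}}-C_*w_{11}-C_*$ therefore silently drops a term that is not absorbable by the right-hand side.

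The paper's proof avoids this by \emph{not} converting the whole A3 term into a power of $w_{11}$. It keeps the trace factor, and upgrades it using the fact that $e_1$ is the top eigendirection, so $\tilde u^{22}\ge\tilde u^{11}$ and hence $c_0\,\tilde u_{11}^2\sum_{i\ge 2}\tilde u^{ii}\ge \tfrac{c_0}{n}\,\tilde u_{11}^2\sum_{i\ge 1}\tilde u^{ii}$ (see \eqref{using A3}). This term then dominates $C\,\tfrac{\tilde u_{11}}{\eta^2}\sum_i\tilde u^{ii}$ \emph{trace against trace}, precisely when $\eta^2\tilde u_{11}$ exceeds a controlled constant, which is the desired estimate; the AM--GM relation $\sum_i\tilde u^{ii}\ge (n-1)B^{-\frac{1}{n-1}}$ (formula \eqref{Buii}) is reserved solely for absorbing the $C\,\tilde u_{11}B^{-\frac{1}{n-1}}$ terms coming from $D_1\tilde B$ and $D_{11}\tilde B$, and the residual $C\,\tilde u_{11}^2$ from \eqref{DppB} is handled by the harmless reduction \eqref{wlg}. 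To repair your argument you should split the A3 term (or simply keep its trace form), invoke $\tilde u^{22}\ge\tilde u^{11}$, and compare like with like; the extra $|Du|^2$ term in your auxiliary function is then unnecessary for this theorem.
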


Before stating the next theorem, we first define the viscosity solution of the equation \eqref{EQUATION}. A function $u$ is called a viscosity subsolution (supersolution) of the equation \eqref{EQUATION}, if for any function ${\phi} \in C^2(\Omega)$ such that $u-{\phi}$ has a local maximum (minimum) at some point $x_0 \in \Omega$, there holds
\begin{equation}
\det[D^2{\phi}(x_0)-A(x_0, \phi(x_0), D{\phi}(x_0))] \geq(\leq) B(x_0, \phi(x_0), D{\phi}(x_0)).
\end{equation}
A function $u$ is a viscosity solution of the equation \eqref{EQUATION} if it is both a viscosity subsolution and a viscosity supersolution of the equation \eqref{EQUATION}.

\begin{Theorem}\label{Th1.2}
Let $\Omega$ be a bounded domain in $\mathbb{R}^n$, and $u$ be a viscosity solution of the equation \eqref{EQUATION}. Assume that $A \in C^2(\bar{\Omega}\times\mathbb{R}\times\mathbb{R}^n, \mathbb{R}^{n\times n})$ is strictly regular, $B$ is a nonnegative function, $B^{\frac{1}{n-1}}\in C^{1,1}(\bar{\Omega}\times\mathbb{R}\times \mathbb{R}^n)$ and $B$ satisfies the condition \eqref{DppB}. Then, we have $u \in C^{1,1}(\Omega)$.
\end{Theorem}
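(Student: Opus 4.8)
The plan is to reduce \eqref{EQUATION}, whose right-hand side is degenerate, to a family of \emph{nondegenerate} Monge-Amp\`ere type problems, apply the \emph{a priori} bound of Theorem~\ref{Th1.1} to their smooth solutions, and recover $u$ as a limit. Since $C^{1,1}$-regularity is local, it suffices to prove $u\in C^{1,1}$ near an arbitrary point $x_0\in\Omega$.

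\emph{Regularisation of $B$.} Put $g:=B^{\frac1{n-1}}\in C^{1,1}$, $g\ge0$, and for $\varepsilon>0$ let $B_\varepsilon:=(g+\varepsilon)^{n-1}$, followed, if smoothness is needed, by a mollification of $\log B_\varepsilon$ in all its variables at a scale $\ll\varepsilon$. Then $B_\varepsilon>0$, $B_\varepsilon^{\frac1{n-1}}\to g$ uniformly, and $\|B_\varepsilon^{\frac1{n-1}}\|_{C^{1,1}}$ stays bounded uniformly in $\varepsilon$ (adding the constant $\varepsilon$ to $g$ does not change its $C^{1,1}$-seminorms, and mollification does not increase the $C^{1,1}$-norm). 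Crucially, $B_\varepsilon$ still obeys \eqref{DppB} with the \emph{same} $C_B$: multiplying \eqref{DppB}, i.e. $D_{pp}\tilde B\ge-C_BI$, by $g>0$ gives $D_{pp}g\ge g^{-1}D_pg\otimes D_pg-\tfrac{C_B}{n-1}gI$; dividing by $g+\varepsilon$ and using $D_pg\otimes D_pg\ge0$, $\varepsilon\ge0$ and $g/(g+\varepsilon)\le1$ yields $D_{pp}\bigl[(n-1)\log(g+\varepsilon)\bigr]\ge-C_BI$, a bound preserved by mollification since $D_{pp}$ commutes with convolution against a probability kernel.

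\emph{Approximate solutions and uniform estimates.} Fix $B_R=B_R(x_0)\Subset\Omega$ with $R$ small enough that $B_R$ is uniformly $A$-convex (possible because $A$ is regular). Since $B\le B_\varepsilon$, $u$ is a viscosity supersolution of $\det[D^2w-A(\cdot,w,Dw)]=B_\varepsilon$; using $u$ and a large-curvature $A$-convex subsolution $\underline u_\varepsilon\le u$ on $\partial B_R$ as barriers, Perron's method together with the interior regularity theory for nondegenerate Monge-Amp\`ere type equations (recall $B_\varepsilon>0$ and smooth; see \cite{JT-Existence, LTW}) produces $u_\varepsilon\in C^\infty(B_R)\cap C^0(\bar B_R)$ solving that equation with $u_\varepsilon=u$ on $\partial B_R$ and $\underline u_\varepsilon\le u_\varepsilon\le u$ on $B_R$, so $\|u_\varepsilon\|_{C^0(\bar B_R)}$ is bounded uniformly in $\varepsilon$. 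Being $A$-convex (on each line, convex up to a quadratic with coefficients controlled by $\|A\|$) and uniformly bounded, $u_\varepsilon$ is uniformly locally Lipschitz on $B_{3R/4}$. Theorem~\ref{Th1.1} applied on $B_{3R/4}$ then gives $|D^2u_\varepsilon|\le C$ on $B_{R/2}$ with $C$ independent of $\varepsilon$, the constant there depending only on quantities kept uniformly under control: $n$, the radius, the uniform Lipschitz bound, $\|B_\varepsilon^{\frac1{n-1}}\|_{C^{1,1}}$, $\|A\|_{C^2}$, $c_0$, and---through \eqref{DppB}---$C_B$.

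\emph{Passage to the limit and the main obstacle.} By Arzel\`a--Ascoli a subsequence $u_\varepsilon\to v$ in $C^{1,\beta}_{\mathrm{loc}}(B_{R/2})$ for every $\beta<1$, with $v\in C^{1,1}(B_{R/2})$ and $|D^2v|\le C$. Stability of viscosity solutions under uniform convergence, together with $B_\varepsilon^{\frac1{n-1}}\to B^{\frac1{n-1}}$ uniformly, shows $v$ is a viscosity solution of \eqref{EQUATION}; since $v=u$ on $\partial B_R$, a comparison principle for \eqref{EQUATION} forces $v\equiv u$ on $B_R$, whence $u\in C^{1,1}(B_{R/2}(x_0))$, and as $x_0$ was arbitrary, $u\in C^{1,1}(\Omega)$. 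The two delicate points are the uniform interior Lipschitz bound for $u_\varepsilon$---where the possible degeneration of the equation in the limit must be absorbed by the $A$-convexity structure and the $C^0$ bound rather than by ellipticity---and the identification $v\equiv u$, which rests on a comparison principle for the degenerate equation \eqref{EQUATION}; supplying (or citing) that comparison principle under the structural assumptions in force is the real crux of closing the scheme.
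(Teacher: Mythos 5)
Your overall scheme---regularise $B$ so that it is strictly positive, solve nondegenerate Dirichlet problems, apply the interior estimate of Theorem \ref{Th1.1} (whose constant is independent of $\inf B$), and pass to the limit---is exactly the argument the paper intends: the authors omit the proof of Theorem \ref{Th1.2} as ``standard'' and carry out this same scheme in detail for the analogous Theorem \ref{Th1.4}. Your verification that $B_\varepsilon=(g+\varepsilon)^{n-1}$ preserves \eqref{DppB} with the same constant $C_B$ is a genuinely useful detail that the paper does not record (though mollifying $g+\varepsilon$ directly is cleaner than mollifying $\log B_\varepsilon$, since exponentiating afterwards does not obviously preserve the $C^{1,1}$ bound on $B_\varepsilon^{1/(n-1)}$).

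Two steps, both of which you flag yourself, remain genuine gaps. First, the uniform interior gradient bound: the constant in Theorem \ref{Th1.1} depends on $\sup|Du_\varepsilon|$, and your derivation of local Lipschitz continuity from ``$A$-convexity plus a $C^0$ bound'' is circular, because the quadratic correction you invoke is controlled by $A(x,u_\varepsilon,Du_\varepsilon)$ evaluated at the unknown gradient, and $A$ is only assumed $C^2$ on $\bar\Omega\times\mathbb{R}\times\mathbb{R}^n$ with no growth restriction in $p$. The paper handles this point in the proof of Theorem \ref{Th1.4} by citing the gradient estimates of \cite{JXX} for degenerate elliptic solutions; you need the same citation or an explicit structure condition on $A$. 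Second, the identification $v\equiv u$: you correctly call the comparison principle the crux, but observe that Theorem \ref{Th1.2}---unlike Theorem \ref{Th1.4}, where the authors explicitly add monotonicity of $A$ and $B$ in $z$ ``in order to guarantee the comparison principle''---assumes no such monotonicity, so the comparison you would invoke is simply not available under the stated hypotheses. To close the argument one must either justify comparison locally on small balls, or bypass it by approximating $u$ itself (sup-/inf-convolutions) rather than solving a fresh boundary value problem. As written, the limit $v$ is a $C^{1,1}$ viscosity solution agreeing with $u$ on $\partial B_R$, but not yet shown to equal $u$.
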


Note that the constant $c_0$ in Theorem \ref{Th1.1} is from the strictly regular condition \eqref{A3} of the matrix $A$. The second order derivative estimate \eqref{second order derivatives estimates} depends on $c_0$, which will blow up when $c_0$ tends to $0$. In this sense, Theorems \ref{Th1.1} and \ref{Th1.2} are not valid for the interior second order derivative estimate under the A3w condition.

However, we can still obtain the interior $C^{1,1}$ regularity for the degenerate Monge-Amp\`{e}re type equation \eqref{EQUATION} under the A3w$^+$ condition with the help of suitable barrier functions. 
In order to construct the barrier functions, we can assume either the $A$-boundedness condition or the existence of a strict subsolution. 

First, we introduce the $A$-boundedness condition as in \cite{LT-Pog, Trudinger2006}. We say that the $A$-boundedness condition holds, if there exists a function $\varphi\in C^2(\bar{\Omega})$ satisfying
\begin{equation}\label{A boundedness}
[D_{ij}\varphi-D_{p_k}A_{ij}(x, z, p)D_k\varphi(x)]\xi_i\xi_j\geq |\xi|^2,
\end{equation}
for all $\xi\in \mathbb{R}^n, (x, z, p) \in \Omega\times \mathbb{R} \times \mathbb{R}^n$. 

Next, we introduce the definition of a strict subsolution of the equation \eqref{EQUATION}. A function $u \in C^{2}(\Omega)$ is called an elliptic (a degenerate elliptic) function when its augmented Hessian matrix $M[u]:=D^{2}u-A(x, u, Du) > 0 (\geq 0)$. 
If $u$ is also a solution of the equation \eqref{EQUATION}, we call it an elliptic (a degenerate elliptic) solution.
A function $\underline u\in C^2(\Omega)$ is said to be elliptic (degenerate elliptic) with respect to $u$ in $\Omega$, if $M_u[\underline u] := D^2 \underline u - A(\cdot, u, D\underline u)>0 (\geq 0)$ in $\Omega$. If such a function $\underline u$ also satisfies
    \begin{equation}\label{sub sol}
    \det(M_u[\underline u]) > B(\cdot,u, D\underline u),
    \end{equation}
at points in $\Omega$, we call $\underline u$ a strict subsolution of the equation \eqref{EQUATION}.

We now formulate the Pogorelov type estimate under A3w$^+$ in the following theorem.

\begin{Theorem}\label{Th1.3}
Let $u \in C^4(\Omega)\cap C^{1,1}(\bar{\Omega})$ be a solution of the equation \eqref{EQUATION} in a bounded domain $\Omega\subset\mathbb{R}^n$, where $B$ is a positive function, $B^{\frac{1}{n-1}}\in C^{1,1}(\bar{\Omega}\times\mathbb{R}\times\mathbb{R}^n)$ and $B$ satisfies the condition \eqref{DppB}. Assume that $A\in C^2(\bar{\Omega}\times\mathbb{R}\times\mathbb{R}^n, \mathbb{R}^{n\times n})$ satisfies the A3w$^+$ condition, and there exists a $C^{1,1}$ function $w$ satisfying $w\ge u$ in $\Omega$, $w=u$ on $\partial\Omega$, which is degenerate elliptic with respect to $u$ in $\Omega$. Assume also one of the following conditions:
\begin{itemize}
\item[(i)] $A$-boundedness condition \eqref{A boundedness} holds;
\item[(ii)] there exists a strict subsolution $\underline u\in C^2(\Omega)$ of the equation \eqref{EQUATION} satisfying \eqref{sub sol}.
\end{itemize}
Then we have the estimate
\begin{equation}\label{pOGORELOV ESTIMATES}
(w-u)^\tau |D^2u| \leq C, \quad {in} \ \Omega,
\end{equation}
where $\tau=2$ if $B_p \not\equiv 0$ and $\tau=1$ if $B_p \equiv 0$, the constant $C$ depends on $n$, $\Omega$, $\|B^{\frac{1}{n-1}}\|_{C^{1,1}}$, $\|A\|_{C^2}$, $\sup\limits_{\Omega}|Dw|$, $\sup\limits_{\Omega}|Du|$. In case (ii), the constant $C$ depends in addition on $\underline u$.
\end{Theorem}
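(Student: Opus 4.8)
Since \eqref{pOGORELOV ESTIMATES} is a pointwise \emph{a priori} estimate for a $C^4$ solution, the plan is to run a maximum‑principle argument on a Pogorelov‑type auxiliary function, adapting the schemes for non‑degenerate Monge--Amp\`ere type equations in \cite{LT-Pog, LTW, JT-Pog} and inserting the weight $(w-u)^\tau$ to absorb the degeneracy. Write $v_{ij} := D_{ij}u - A_{ij}(\cdot, u, Du)$, so that $(v_{ij}) > 0$ and, along $(\cdot, u, Du)$, $\log\det(v_{ij}) = \tilde B := \log B$; let $(v^{ij})$ be the inverse matrix and set $Lh := v^{ij}D_{ij}h$. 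For a unit vector $\xi$ put $v_{\xi\xi} = v_{ij}\xi_i\xi_j$ and consider
\begin{equation*}
G(x,\xi) := \tau\log\bigl(w(x)-u(x)\bigr) + \log v_{\xi\xi}(x) + \tfrac{\mu}{2}|Du(x)|^2 + \nu\,\psi(x),
\end{equation*}
where $\psi$ is the barrier furnished by the hypotheses — in case (i) the function $\varphi$ of \eqref{A boundedness}, in case (ii) a concave increasing function of $\underline u - u$, say $1 - e^{-K(\underline u - u)}$ with $K$ large — and $\mu,\nu>0$ are large constants to be chosen. I would maximize $G$ over $\bar\Omega\times\{|\xi|=1\}$. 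Since $w-u$ vanishes on $\partial\Omega$ while $u\in C^{1,1}(\bar\Omega)$, the maximum is either attained on $\partial\Omega$, where \eqref{pOGORELOV ESTIMATES} is immediate, or at an interior point $x_0$; rotating coordinates one may take $\xi_0 = e_1$ and $(v_{ij}(x_0))$ diagonal with $v_{11}$ its largest entry. (If $w$ is merely $C^{1,1}$, one first argues with a $C^2$ regularization of $w$ that remains degenerate elliptic with respect to $u$ up to a controllable error, then passes to the limit.)

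At $x_0$ the first‑order condition $DG = 0$ gives $D_i v_{11}/v_{11} = -\tau D_i(w-u)/(w-u) - \mu u_k u_{ki} - \nu D_i\psi$, and the second‑order condition gives $LG(x_0)\le 0$. The core is a lower bound for $Lv_{11}$: differentiating $\log\det(v_{ij}) = \tilde B(\cdot,u,Du)$ once and twice along $e_1$ and commuting the fourth derivatives of $u$ yields the identity
\begin{equation*}
Lv_{11} = D_{11}\tilde B - v^{ij}\bigl(D_{ij}A_{11}-D_{11}A_{ij}\bigr) + v^{ip}v^{jq}D_1v_{ij}\,D_1v_{pq},
\end{equation*}
the last term being nonnegative. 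Expanding the commutator by the chain rule and evaluating where $(v_{ij})$ is diagonal, its part quadratic in $D^2u$ is, up to errors of size $O\bigl(1 + \mathrm{tr}(v) + (1+v_{11})\,\mathrm{tr}(v^{-1})\bigr)$, equal to $-\sum_k v_{kk}D^2_{p_kp_k}A_{11} + v_{11}^2\,v^{ij}D^2_{p_1p_1}A_{ij}$. The first summand is bounded below by $-C\,\mathrm{tr}(v)$; for the second, the A3w$^+$ condition \eqref{A3w+}, which on taking $\eta=e_1$ there states precisely that the matrix $\bigl(D^2_{p_1p_1}A_{ij}\bigr)$ dominates $\mu_0\, e_1\otimes e_1$, yields $v_{11}^2\,v^{ij}D^2_{p_1p_1}A_{ij}\ge\mu_0\,v_{11}$ — whereas without such one‑sided control it could be as negative as $-v_{11}^2\,\mathrm{tr}(v^{-1})$, a size the degenerate structure cannot absorb. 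The remaining, third‑order‑in‑$u$, part of the commutator is handled using the once‑differentiated equation $v^{ij}D_\ell v_{ij} = D_\ell\tilde B$ and the first‑order condition. In $D_{11}\tilde B$ the only dangerous terms are $\tilde B_{p_kp_l}u_{1k}u_{1l}\ge -C_B|u_{1\cdot}|^2\ge -C(1+v_{11}^2)$, where \eqref{DppB} is used, and $\tilde B_{p_k}u_{11k}$, which after substituting $u_{11k} = D_k v_{11} + D_k[A_{11}(\cdot,u,Du)]$ and the first‑order condition produces — only when $B_p\not\equiv 0$ — a term of order $v_{11}/(w-u)$.

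Dividing $Lv_{11}$ by $v_{11}$, substituting into $LG(x_0)\le0$ and discarding the manifestly nonpositive gradient terms, one is left with an inequality whose problematic terms are of three kinds: (a) a multiple of $v_{11}$, from $\mathrm{tr}(D^2_{pp}\tilde B)$ via \eqref{DppB} and from the $A$‑curvature above; (b) multiples of $\mathrm{tr}(v^{-1})$, from $A\not\equiv0$ throughout and from $L(w-u)\ge -C\,\mathrm{tr}(v^{-1}) - n$, which uses the degenerate ellipticity of $w$ relative to $u$; and (c), present only when $B_p\not\equiv0$, a term of order $1/(w-u)$. Term (a) is dominated by the positive $\mu\,\mathrm{tr}(v)\ge\mu v_{11}$ from $\tfrac{\mu}{2}|Du|^2$ once $\mu$ exceeds a structural constant depending only on $n$, $\|A\|_{C^2}$ and $C_B$; terms (b) are dominated, for $\nu$ large, by $\nu L\psi\ge\nu\,\mathrm{tr}(v^{-1}) - C\nu$, which is exactly where condition (i) (via \eqref{A boundedness}) or condition (ii) enters, the point being that in the degenerate regime $\mathrm{tr}(v^{-1})$ is not controlled by any power of $v_{11}$; term (c) is swallowed by multiplying through by $(w-u)^2$, whence $\tau=2$, while when $B_p\equiv0$ term (c) is absent and one power of $(w-u)$ suffices, giving $\tau=1$. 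After these choices, and using $\sup_\Omega|Dw|,\sup_\Omega|Du|<\infty$, one obtains $(w-u)^\tau v_{11}\le C$, and since $|D^2u|\le C v_{11} + C$ this is \eqref{pOGORELOV ESTIMATES}. In case (ii) one checks in addition, in the usual way, that $L\psi\ge c\,\mathrm{tr}(v^{-1}) - C$ for the exponential barrier: this uses $M_u[\underline u]\ge\epsilon_0 I$ and $\det M_u[\underline u] > \det M_u[u]$, the cross terms $v^{ij}D_{p_k}A_{ij}D_k(\underline u - u)$ being absorbed, for $K$ large, into $K\,v^{ij}D_i(\underline u - u)D_j(\underline u - u)$.

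The step I expect to be the main obstacle is this interplay between the degeneracy and the weakness of A3w$^+$. In the non‑degenerate case $\det v = B\ge B_0>0$ forces $\mathrm{tr}(v^{-1})\le C v_{11}^{n-1}$ and makes $1/(w-u)$ polynomially controllable, so the unfavourable terms can be reabsorbed into the Pogorelov mechanism itself; here neither is available, so one must genuinely invoke the barrier of (i) or (ii), with $\nu$ taken arbitrarily large, to annihilate the $\mathrm{tr}(v^{-1})$ terms, and the correctly tuned weight exponent $\tau$ to annihilate the $1/(w-u)$ term generated by $B_p$. Moreover, unlike the A3 case of Theorem \ref{Th1.1}, A3w$^+$ gives only one‑sided control of the $A$‑curvature terms and no quantitative gain from a positive $c_0$, so there is no slack: the constants $\mu$, $\nu$ and the exponent $\tau$ must all be balanced simultaneously while every lower‑order term is tracked, and it is this bookkeeping — rather than any single inequality — that is the delicate part.
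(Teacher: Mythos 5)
Your overall scheme coincides with the paper's: the same weighted test function $\log(w-u)^{\text{power}}+\log v_{\xi\xi}+\tfrac{\mu}{2}|Du|^2+\nu\psi$, the same barrier from (i) or (ii) to control $\mathrm{tr}(v^{-1})=\sum_i v^{ii}$, the same use of \eqref{DppB} for $D_{pp}\tilde B$ and of $\mathrm{tr}(v^{-1})\ge (n-1)B^{-1/(n-1)}$. But the proposal fails at the step the paper itself flags as critical (Remark \ref{Rm4.1}), namely the estimate of $L(w-u)$. You bound $L(w-u)\ge -C\,\mathrm{tr}(v^{-1})-n$ using only degenerate ellipticity of $w$ and then list the resulting $\mathrm{tr}(v^{-1})$ among the terms to be dominated by $\nu L\psi$ for $\nu$ large. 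This term, however, enters $LG$ multiplied by $\tau/(w-u)$, so what must actually be absorbed is $-\tfrac{C\tau}{w-u}\,\mathrm{tr}(v^{-1})$, whose coefficient is unbounded as $w-u\to 0$; no fixed $\nu$ suffices, and letting $\nu$ depend on $(w-u)(x_0)$ defeats the purpose. The paper's way out is to apply A3w$^+$ precisely here: Taylor expanding $A_{ii}(x,u,Dw)-A_{ii}(x,u,Du)-D_{p_k}A_{ii}D_k(w-u)$ to second order in $p$ and using \eqref{A3w+} turns the dangerous part of $L(w-u)$ into $-\tfrac{\mu_0^-}{2}\sum_i v^{ii}\bigl(D_i(w-u)\bigr)^2$ (see \eqref{Leta}); the quadratic factor $(D_i(w-u))^2$ is what allows the $\tfrac1{w-u}$-weighted term to be rewritten as $\sum_i v^{ii}(D_i\eta)^2/\eta^2$ and then eliminated through the squared first-order condition together with the Pogorelov term. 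You invoke A3w$^+$ only for the commutator in $Lv_{11}$ (where the paper uses it as well) and miss its essential role in $L(w-u)$.

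A second genuine problem is your plan to ``discard the manifestly nonpositive gradient terms.'' In $0\ge LG$ the nonpositive terms $-v^{ij}D_iv_{11}D_jv_{11}/v_{11}^2$ and $-\tau v^{ij}D_i\eta D_j\eta/\eta^2$ sit on the wrong side of the inequality and must be bounded from \emph{below}, not dropped. The first requires the Pogorelov term $\tfrac1{v_{11}}v^{ip}v^{jq}D_1v_{ij}D_1v_{pq}$ (which you note is nonnegative and then never use) combined with the commutation identity $D_1v_{1i}-D_iv_{11}=D_iA_{11}-D_1A_{1i}$, as in \eqref{3.28}; the second requires squaring $DG=0$, and the resulting coefficient $C/(\text{weight exponent})$ in front of $\sum_{i\ge2}v^{ii}(D_iv_{11}/v_{11})^2$ must be made at most $\tfrac12$, which forces the weight exponent to be a \emph{large} structural constant (the paper takes $\alpha=(\beta^2+\gamma^2+2)C$), not the fixed $\tau\in\{1,2\}$ you hard-wire into the test function. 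With exponent $2$ this absorption fails. Relatedly, even with the large exponent the maximum-point inequality one obtains is $\eta^2\tilde u_{11}(\bar x)\le C$ for the function weighted by $\eta^\alpha$, and the passage from this to the stated exponent $\tau=2$ at arbitrary points is a step your plan does not address either.
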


There is a technical reason why we restrict our attention under the A3w$^+$ condition, see Remark \ref{Rm4.1} after the proof of Theorem \ref{Th1.3}.

\begin{remark}
We remark that, in Theorem \ref{Th1.3}, if $B$ satisfies a further condition $\frac{|B_p|}{B} \le C$ for some nonnegative constant $C$, then the estimate \eqref{pOGORELOV ESTIMATES} can be improved to $(w-u) |D^2u| \leq C$, which corresponds to the estimate \eqref{pOGORELOV ESTIMATES} for the $B_p\equiv 0$ case as well.
\end{remark}

From Theorem \ref{Th1.3}, we can have the following interior regularity result.

\begin{Theorem}\label{Th1.4}
Under the assumptions of Theorem \ref{Th1.3}, assume instead that $u$ is a viscosity solution of the equation \eqref{EQUATION} and $B$ is a nonnegative function, and assume further that $A$ and $B$ are nondecreasing in $z$. Then we have $u \in C^{1,1}(\Omega)$.
\end{Theorem}

In order to guarantee the comparison principle, the monotonicity conditions for both $A$ and $B$ with respect to $z$ are assumed in Theorem \ref{Th1.4}.

\begin{remark}
We emphasize that the constants $C$ in both the estimates \eqref{second order derivatives estimates} in Theorem \ref{Th1.1} and \eqref{pOGORELOV ESTIMATES} in Theorem \ref{Th1.3} are independent of the positive lower bound of $B$, so that they can be applied to obtain the interior $C^{1,1}$ regularity for the degenerate equation \eqref{EQUATION}. The assumption $B^{\frac{1}{n-1}}\in C^{1,1}$ can be found in \cite{Guan, GTW}, which is proved to be optimal in \cite{W} when $A\equiv 0$ and $B$ is independent of $z$ and $p$. When $\mu_0 \le 0$, the matrix $A\equiv 0$ satisfies the A3w$^+$ condition \eqref{A3w+} automatically, so that Theorem \ref{Th1.3} and \ref{Th1.4} can apply to the standard Monge-Amp\`ere equation $\det D^2u=B(\cdot, u, Du)$.
\end{remark}

The organization of this paper is as follows. In Section \ref{Section 2}, we introduce some properties of $B$ when $B^{\frac{1}{n-1}}\in C^{1,1}$, in Lemma \ref{Lemma 1} and Corollary \ref{Coro2.1}, which are useful in deriving estimates independent of the lower bound of $B$. A fundamental barrier construction under the A3w condition is also introduced in Lemma \ref{Lemma 2}, which will be used in Section \ref{Section 4} when we only assume the A3w$^+$ condition. In Section \ref{Section 3}, we obtain interior second order derivative estimates for the Monge-Amp\`{e}re type equation \eqref{EQUATION} under A3 condition, and then show the interior $C^{1,1}$ regularity for viscosity solutions of the DMATE \eqref{EQUATION}. In Section \ref{Section 4}, under the A3w$^+$ condition, we establish the Pogorelov type estimates for the Monge-Amp\`{e}re type equation \eqref{EQUATION} by using suitable barrier functions, and apply these estimates to obtain interior $C^{1,1}$ regularity for viscosity solutions of the DMATE \eqref{EQUATION}.

\vspace{3mm}

\section{Preliminaries}\label{Section 2}

In this section, we introduce some properties of $B$ when $B^{\frac{1}{n-1}}\in C^{1,1}(\bar{\Omega}\times\mathbb{R}\times\mathbb{R}^n)$, and a fundamental  lemma of barrier construction, which will be used in later sections.

In the equation \eqref{EQUATION}, we suppose $B>0$ in $\Omega$, $\tilde{u}_{ij}: =u_{ij}-A_{ij}$ and $\{\tilde{u}^{ij}\}:= \{\tilde{u}_{ij}\}^{-1}$. Then both matrices $\{\tilde{u}_{ij}\}$ and $\{\tilde{u}^{ij}\}$ are positive definite.
We can rewrite the equation \eqref{EQUATION} in the form
\begin{equation}\label{EQUATION'}
\log\det \{\tilde{u}_{ij}\}=\tilde{B}, \quad {\rm in} \ \Omega,
\end{equation}
where $\tilde B:= \log B$.
By differentiating the equation \eqref{EQUATION'} in the direction $\xi\in \mathbb{R}^n$ once and twice respectively, we have
\begin{equation}\label{Differentiation of the equation once}
\tilde{u}^{ij}[D_{\xi}u_{ij}-D_\xi A_{ij}-(D_zA_{ij})D_\xi u-(D_{p_k}A_{ij})D_\xi u_k]=D_\xi\tilde{B},
\end{equation}
and
\begin{equation}\label{Differentiation of the equation twice}
\begin{split}
  &\tilde{u}^{ij}[D_{\xi\xi}u_{ij}-D_{\xi\xi}A_{ij}-(D_{p_kp_l}A_{ij})D_\xi u_k D_\xi u_l-(D_{p_k}A_{ij})D_{\xi\xi}u_k -(D_zA_{ij})D_{\xi\xi}u\\
  & -(D_{zz}A_{ij})(D_\xi u)^2 -2(D_{\xi z}A_{ij})D_\xi u -2(D_{zp_k}A_{ij})D_\xi uD_\xi u_k-2(D_{\xi p_k}A_{ij})D_\xi u_k] \\
=&\tilde{u}^{is}\tilde{u}^{jt}D_\xi\tilde{u}_{ij}D_\xi\tilde{u}_{st} +D_{\xi\xi}\tilde{B},
\end{split}
\end{equation}
where
\begin{equation}\label{DiB}
D_\xi\tilde{B}=\frac{B_\xi+B_zD_\xi u+B_{p_k}D_{\xi}u_k}{B},
\end{equation}
and
\begin{equation}\label{DijB}
\begin{split}
D_{\xi\xi}\tilde{B}= &\frac{B_{\xi\xi}+B_{zz}(D_\xi u)^2+B_z D_{\xi\xi}u+B_{p_k}D_{\xi\xi}u_k+B_{p_kp_l}(D_{\xi}u_l)(D_{\xi}u_k)}{B}\\
& +\frac{2B_{\xi z}D_\xi u+2B_{\xi p_l}D_{\xi}u_l+2B_{zp_k}(D_{\xi}u)(D_{\xi}u_k)}{B}\\
&-\frac{B_{\xi}^2+B_z^2(D_{\xi}u)^2+B_{p_k}B_{p_l}(D_{\xi}u_k)(D_{\xi}u_l)}{B^2} \\
&-\frac{2B_{\xi}B_zu_{\xi}+2B_{\xi}B_{p_l}D_{\xi}u_l+2B_zB_{p_l}(D_{\xi}u)(D_{\xi}u_l)}{B^2}.
\end{split}
\end{equation}
Note that we use the standard summation convention in the context that repeated indices indicate summation from $1$ to $n$ unless otherwise specified.

We introduce the following lemma and its corollary, in order to deal with the right-hand side term of the equation \eqref{EQUATION}. 

\begin{Lemma}\label{Lemma 1}
Assume $B^{\frac{1}{n-1}}(x, u, Du)\in C^{1,1}(\bar{\Omega}\times\mathbb{R}\times\mathbb{R}^n)$ and $B > 0$, then we have
\begin{equation}\label{B_i over B}
\left |\frac{B_i}{B}\right |, \left |\frac{B_z}{B}\right |, \left |\frac{B_{p_i}}{B}\right | \leq (n-1) \sqrt{2 \|B^{\frac{1}{n-1}} \|_{C^{1,1}(\bar{\Omega}\times\mathbb{R}\times\mathbb{R}^n)}}B^{-\frac{1}{2(n-1)}},
\end{equation}
in $\bar\Omega\times\mathbb{R}\times\mathbb{R}^n$, for $i=1,\cdots, n$, and
\begin{equation}\label{Bij over B}
\left |\frac{B_{ij}}{B}\right |, \left |\frac{B_{iz}}{B} \right |, \left |\frac{B_{ip_j}}{B} \right |, \left |\frac{B_{zz}}{B}\right |, \left |\frac{B_{zp_i}}{B} \right |, \left |\frac{B_{p_ip_j}}{B} \right | \le (n-1) (2n-3)  \|B^{\frac{1}{n-1}} \|_{C^{1,1}(\bar\Omega\times\mathbb{R}\times\mathbb{R}^n)} B^{-\frac{1}{n-1}},
\end{equation}
in $\bar\Omega\times\mathbb{R}\times\mathbb{R}^n$, for $i,j =1,\cdots, n$.
\end{Lemma}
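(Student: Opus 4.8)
The plan is to set $v := B^{1/(n-1)}$, so that $v \in C^{1,1}(\bar\Omega\times\mathbb{R}\times\mathbb{R}^n)$ and $B = v^{n-1}$, and then simply differentiate this identity. For a first-order derivative $D$ (meaning any of $\partial_{x_i}$, $\partial_z$, $\partial_{p_i}$) we get $DB = (n-1)v^{n-2}Dv$, hence
\[
\frac{DB}{B} = (n-1)\frac{Dv}{v}.
\]
Now the key point is that a nonnegative $C^{1,1}$ function controls its own gradient pointwise: since $v\ge 0$ and $v\in C^{1,1}$, one has $|Dv|^2 \le 2\|v\|_{C^{1,1}} v$ at every point (this is the standard ``square-root of a $C^{1,1}$ function has bounded gradient'' inequality, obtained by Taylor-expanding $v$ at a point and using that the quadratic in the increment must stay nonnegative). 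Plugging this into the displayed identity yields
\[
\left|\frac{DB}{B}\right| = (n-1)\frac{|Dv|}{v} \le (n-1)\frac{\sqrt{2\|v\|_{C^{1,1}}\,v}}{v} = (n-1)\sqrt{2\|v\|_{C^{1,1}}}\;v^{-1/2} = (n-1)\sqrt{2\|v\|_{C^{1,1}}}\;B^{-\frac{1}{2(n-1)}},
\]
which is exactly \eqref{B_i over B}. Each of the three stated quantities is an instance of this with $D=\partial_{x_i},\partial_z,\partial_{p_i}$ respectively.

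For the second-order bound \eqref{Bij over B}, I would differentiate $DB = (n-1)v^{n-2}Dv$ once more (with $D'$ another first-order derivative, possibly different from $D$), obtaining
\[
D'DB = (n-1)(n-2)v^{n-3}(D'v)(Dv) + (n-1)v^{n-2}D'Dv.
\]
Dividing by $B = v^{n-1}$ gives
\[
\frac{D'DB}{B} = (n-1)(n-2)\frac{(D'v)(Dv)}{v^2} + (n-1)\frac{D'Dv}{v}.
\]
For the first term use the gradient bound $|Dv|,|D'v| \le \sqrt{2\|v\|_{C^{1,1}}}\,v^{1/2}$ twice, so it is bounded by $(n-1)(n-2)\cdot 2\|v\|_{C^{1,1}}\,v^{-1}$; for the second term use $|D'Dv| \le \|v\|_{C^{1,1}}$ (the $C^{1,1}$ seminorm bounds the a.e.-defined second derivatives, and one reads off the pointwise bound from the $C^{1,1}$ norm), so it is bounded by $(n-1)\|v\|_{C^{1,1}}\,v^{-1}$. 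Adding, $|D'DB/B| \le (n-1)\big(2(n-2)+1\big)\|v\|_{C^{1,1}}\,v^{-1} = (n-1)(2n-3)\|v\|_{C^{1,1}}\,B^{-\frac{1}{n-1}}$, which is \eqref{Bij over B}, and again every listed quantity is a special case.

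The main point requiring care — and the only genuinely non-routine step — is the pointwise gradient estimate $|Dv|^2\le 2\|v\|_{C^{1,1}}v$ for a nonnegative $C^{1,1}$ function. The argument: at any point $\zeta$, for small $h$ the Taylor expansion with Lipschitz first derivatives gives $0 \le v(\zeta+h) \le v(\zeta) + Dv(\zeta)\cdot h + \tfrac12\|v\|_{C^{1,1}}|h|^2$; choosing $h = -t\,Dv(\zeta)/|Dv(\zeta)|$ and optimizing in $t>0$ (the optimal $t$ being $|Dv(\zeta)|/\|v\|_{C^{1,1}}$) forces $v(\zeta) \ge |Dv(\zeta)|^2/(2\|v\|_{C^{1,1}})$. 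A minor subtlety is that $v$ need only be $C^{1,1}$ (not $C^2$), so one works with the Lipschitz constant of $Dv$ rather than a genuine Hessian; this is harmless since $\|v\|_{C^{1,1}}$ already encodes exactly that Lipschitz constant, and the same remark justifies reading $|D'Dv|\le\|v\|_{C^{1,1}}$ almost everywhere, which suffices because the bounds \eqref{Bij over B} are then understood in the same a.e. sense. Also one should note the domains are products with $\mathbb{R}$ and $\mathbb{R}^n$, which are unbounded, but the Taylor argument is local so no issue arises; the $C^{1,1}$ norm is the global supremum of the relevant quantities over the whole product space.
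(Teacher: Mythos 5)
Your argument is essentially the paper's own: both set $v=B^{1/(n-1)}$, reduce everything to the pointwise inequality $|Dv|^2\le 2\|v\|_{C^{1,1}}\,v$ for the nonnegative $C^{1,1}$ function $v$, and then use the identities $B_i/B=(n-1)v_i/v$ and $B_{ij}/B=(n-1)v_{ij}/v+(n-1)(n-2)v_iv_j/v^2$ with exactly the same bookkeeping of constants, so the two proofs coincide. The one point where your justification is off is the remark that ``the Taylor argument is local so no issue arises'': the obstruction is not the unboundedness of the $z$- and $p$-factors but the boundedness of $\bar\Omega$ in $x$ --- the optimizing increment $h=-K^{-1}Dv(\zeta)$ may leave $\bar\Omega\times\mathbb{R}\times\mathbb{R}^n$ when $\zeta$ is close to $\partial\Omega$, and the inequality $|Dv|^2\le 2\|v\|_{C^{1,1}}v$ can genuinely fail at such points (consider $v(x)=x+\varepsilon$ on $[0,1]$ at $x=0$). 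The paper closes this by first extending $B^{1/(n-1)}$ to all of $\mathbb{R}^n\times\mathbb{R}\times\mathbb{R}^n$ with the same $C^{1,1}$ norm via Kirszbraun's theorem, after which the Taylor inequality is available for arbitrary increments; you should insert the same (one-line) step.
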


\begin{proof}
By Taylor's formula, for any given $(x_0, z_0, p_0)\in \bar\Omega \times \mathbb{R} \times \mathbb{R}^n$,
\begin{equation}\label{Taylor}
\begin{split}
0 \le & B^{\frac{1}{n-1}}(x, z, p)\\
\le  & B^{\frac{1}{n-1}}(x_0, z_0, p_0)+\nabla \left (B^{\frac{1}{n-1}} \right )(x_0,z_0, p_0)\cdot \left (x-x_0,z-z_0, p-p_0 \right )\\
&+\frac{1}{2} \|B^{\frac{1}{n-1}}  \|_{C^{1,1}(\bar{\Omega}\times\mathbb{R}\times\mathbb{R}^n)}\left [|x-x_0|^2+|z-z_0|^2+|p-p_0|^2 \right ],
\end{split}
\end{equation}
holds for any $(x, z, p)\in \bar\Omega \times \mathbb{R} \times \mathbb{R}^n$, where $\nabla := (D_x, D_z, D_p)$. Kirszbraun's Theorem (in Section 12.10.43 in \cite{Federer}) asserts that there exists an extension from $\bar \Omega \times \mathbb{R} \times \mathbb{R}^n$ to $\mathbb{R}^n \times \mathbb{R} \times \mathbb{R}^n$ such that $B^{\frac{1}{n-1}}\in C^{1,1}(\mathbb{R}^n\times\mathbb{R}\times\mathbb{R}^n)$ and $\|B^{\frac{1}{n-1}}\|_{C^{1,1}(\mathbb{R}^n\times\mathbb{R}\times\mathbb{R}^n)}=\|B^{\frac{1}{n-1}}\|_{C^{1,1}(\bar{\Omega}\times\mathbb{R}\times\mathbb{R}^n)}$, then \eqref{Taylor} holds for all $(x, z, p)\in \mathbb{R}^n \times \mathbb{R} \times \mathbb{R}^n$. Consequently, we have
\begin{equation}
\left ((B^{\frac{1}{n-1}})_{i}(x_0,z_0, p_0)\right)^2-2 \|B^{\frac{1}{n-1}}\|_{C^{1,1}(\bar{\Omega}\times\mathbb{R}\times\mathbb{R}^n)} B^{\frac{1}{n-1}}(x_0,z_0, p_0)\leq 0, \quad {\rm for} \ i=1,\cdots, n,
\end{equation}
\begin{equation}\label{}
\left ((B^{\frac{1}{n-1}})_{z}(x_0,z_0, p_0)\right)^2-2 \|B^{\frac{1}{n-1}}\|_{C^{1,1}(\bar{\Omega}\times\mathbb{R}\times\mathbb{R}^n)} B^{\frac{1}{n-1}}(x_0,z_0, p_0)\leq 0,
\end{equation}
and 
\begin{equation}\label{}
\left ((B^{\frac{1}{n-1}})_{p_i}(x_0,z_0, p_0)\right )^2-2 \|B^{\frac{1}{n-1}}\|_{C^{1,1}(\bar{\Omega}\times\mathbb{R}\times\mathbb{R}^n)} B^{\frac{1}{n-1}}(x_0,z_0, p_0)\leq 0,
\quad {\rm for} \ i=1,\cdots, n,
\end{equation}
namely,
\begin{equation}\label{Bi}
\left |(B^{\frac{1}{n-1}})_{i}(x_0,z_0, p_0)\right | \leq \sqrt{2 \|B^{\frac{1}{n-1}}\|_{C^{1,1}(\bar{\Omega}\times\mathbb{R}\times\mathbb{R}^n)}}B^{\frac{1}{2(n-1)}}(x_0,z_0, p_0), \quad {\rm for} \ i=1,\cdots, n,
\end{equation}
\begin{equation}\label{Bz}
\left |(B^{\frac{1}{n-1}})_{z}(x_0,z_0, p_0)\right | \leq \sqrt{2 \|B^{\frac{1}{n-1}}\|_{C^{1,1}(\bar{\Omega}\times\mathbb{R}\times\mathbb{R}^n)}}B^{\frac{1}{2(n-1)}}(x_0, z_0, p_0),
\end{equation}
and
\begin{equation}\label{Bpi}
\left |(B^{\frac{1}{n-1}})_{p_i}(x_0,z_0, p_0) \right | \leq \sqrt{2 \|B^{\frac{1}{n-1}}\|_{C^{1,1}(\bar{\Omega}\times\mathbb{R}\times\mathbb{R}^n)}}B^{\frac{1}{2(n-1)}}(x_0, z_0, p_0),
\quad {\rm for} \ i=1,\cdots, n.
\end{equation}
By \eqref{Bi}, \eqref{Bz} and \eqref{Bpi}, we have
\begin{equation}\label{Bxi over B}
\left |\frac{B_i}{B}(x_0,z_0, p_0)\right | \leq (n-1) \sqrt{2 \|B^{\frac{1}{n-1}}\|_{C^{1,1}(\bar{\Omega}\times\mathbb{R}\times\mathbb{R}^n)}} B^{-\frac{1}{2(n-1)}}(x_0,z_0, p_0), \quad {\rm for} \ i=1,\cdots, n,
\end{equation}
\begin{equation}\label{Bz over B}
\left |\frac{B_z}{B}(x_0,z_0, p_0)\right | \leq (n-1) \sqrt{2 \|B^{\frac{1}{n-1}}\|_{C^{1,1}(\bar{\Omega}\times\mathbb{R}\times\mathbb{R}^n)}} B^{-\frac{1}{2(n-1)}}(x_0,z_0, p_0),
\end{equation}
and
\begin{equation}\label{Bpi over B}
\left |\frac{B_{p_i}}{B}(x_0,z_0, p_0)\right | \leq (n-1) \sqrt{2 \|B^{\frac{1}{n-1}}\|_{C^{1,1}(\bar{\Omega}\times\mathbb{R}\times\mathbb{R}^n)}} B^{-\frac{1}{2(n-1)}}(x_0,z_0, p_0), \quad {\rm for} \ i=1,\cdots, n.
\end{equation}
Since $(x_0, z_0, p_0)$ can be an arbitrary point in $\bar \Omega\times\mathbb{R}\times\mathbb{R}^n$, from \eqref{Bxi over B}, \eqref{Bz over B} and \eqref{Bpi over B}, conclusion \eqref{B_i over B} is proved.

Next, by a direct computation, we obtain
\begin{equation}\label{Bij B n-1}
D_{ij}\left (B^{\frac{1}{n-1}}\right )=\frac{1}{n-1}B^{\frac{1}{n-1}} \left (\frac{B_{ij}}{B} - \frac{n-2}{n-1}\frac{B_i}{B}\frac{B_j}{B} \right ),
\end{equation}
in $\bar\Omega\times\mathbb{R}\times\mathbb{R}^n$, for $i,j=1,\cdots, n$.
Therefore, we have from \eqref{Bij B n-1} that
\begin{equation}\label{Bij over B+}
\begin{array}{rl}
\displaystyle \left |\frac{B_{ij}}{B}\right | \!\!&\!\!\displaystyle \le (n-1) \left |(B^{\frac{1}{n-1}})_{ij}\right | B^{-\frac{1}{n-1}} + \frac{n-2}{n-1} \left |\frac{B_i}{B}\right | \left |\frac{B_j}{B}\right | \\
                         \!\!&\!\!\displaystyle \le (n-1) (2n-3) \|B^{\frac{1}{n-1}}\|_{C^{1,1}(\bar\Omega\times\mathbb{R}\times\mathbb{R}^n)} B^{-\frac{1}{n-1}},
\end{array}
\end{equation}
in $\bar\Omega\times\mathbb{R}\times\mathbb{R}^n$, where \eqref{B_i over B} is used in the last inequality. Then \eqref{Bij over B+} completes the proof of the first inequality in \eqref{Bij over B}. The other inequalities in \eqref{Bij over B} can be derived similarly to \eqref{Bij over B+}. We omit the remaining proof, in order to avoid too many repetitions.
\end{proof}

\begin{remark}\label{Rm2.1}
In fact, we can have a relaxed version of the estimate \eqref{B_i over B},
\begin{equation}\label{B_i over B'}
\left |\frac{B_i}{B}\right |, \left |\frac{B_z}{B}\right |, \left |\frac{B_{p_i}}{B}\right | \leq (n-1) \|B^{\frac{1}{n-1}} \|_{C^{0,1}(\bar{\Omega}\times\mathbb{R}\times\mathbb{R}^n)} B^{-\frac{1}{n-1}},
\end{equation}
in $\bar\Omega\times\mathbb{R}\times\mathbb{R}^n$, for $i=1,\cdots, n$, which can be readily verified by a direct calculation. Namely, we have
\begin{equation}
\left |\frac{B_i}{B}\right | =  (n-1) \left | D_i(B^{\frac{1}{n-1}}) \right | B^{-\frac{1}{n-1}}  \leq (n-1) \|B^{\frac{1}{n-1}} \|_{C^{0,1}(\bar{\Omega}\times\mathbb{R}\times\mathbb{R}^n)}B^{-\frac{1}{n-1}},
\end{equation}
for $i=1,\cdots, n$. The estimates for $\left |\frac{B_z}{B}\right |$ and $\left |\frac{B_{p}}{B}\right |$ can be obtained exactly in the same way.
\end{remark}

We have the following consequence of Lemma \ref{Lemma 1} and Remark \ref{Rm2.1}.
\begin{Corollary}\label{Coro2.1}
Assume $B^{\frac{1}{n-1}}(x, u, Du)\in C^{1,1}(\bar{\Omega}\times\mathbb{R}\times\mathbb{R}^n)$, $B > 0$ and $\tilde{B}=\log B$. Then we have the following properties:

(i) \begin{equation}\label{D_itildeB}
|D_i\tilde{B}| \le C\left [1+\max_{j}\left(|\tilde{u}_{ij}|\right )\right ] B^{-\frac{1}{n-1}}
\end{equation}
holds for $i=1,\cdots, n$, where the constant $C$ depends on $n, \|B^{\frac{1}{n-1}} \|_{C^{0,1}}, A$ and $\sup\limits_{\Omega}|Du|$.

(ii) If the condition \eqref{DppB} holds, then
\begin{equation}\label{D_ijtildeB}
D_{ii}\tilde{B} \ge -C\left [1+\max_j\left(|\tilde{u}_{ij}|\right)\right ] B^{-\frac{1}{n-1}}-C^\prime\left [1+\max_j\left(|\tilde{u}_{ij}|\right)\right ]^2 + \sum_{k=1}^n \tilde B_{p_k}D_{ii}u_k
\end{equation}
holds for $i=1,\cdots, n$, where the constant $C$ depends on $n, \|B^{\frac{1}{n-1}} \|_{C^{1,1}}, A$ and $\sup\limits_{\Omega}|Du|$,  and the constant $C^\prime$ depends on  $C_B$ and $A$.
\end{Corollary}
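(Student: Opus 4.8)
The plan is to read both estimates off directly from the pointwise identities \eqref{DiB} and \eqref{DijB} for $D_\xi\tilde B$ and $D_{\xi\xi}\tilde B$, specialised to $\xi=e_i$, and then to control each term by the bounds of Lemma~\ref{Lemma 1} and Remark~\ref{Rm2.1}. The only extra algebraic input is the relation $u_{jk}=\tilde u_{jk}+A_{jk}$, which gives $|u_{jk}|\le\max_l|\tilde u_{jl}|+\|A\|_{C^0}$, hence $\sum_k|u_{ik}|\le n(\max_k|\tilde u_{ik}|+\|A\|_{C^0})\le C(1+\max_k|\tilde u_{ik}|)$, together with $|u_i|\le\sup_\Omega|Du|$; throughout I write $M:=1+\max_k|\tilde u_{ik}|$.

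For (i), taking $\xi=e_i$ in \eqref{DiB} gives $D_i\tilde B=\frac{B_i}{B}+\frac{B_z}{B}u_i+\sum_k\frac{B_{p_k}}{B}u_{ik}$. By the $C^{0,1}$ bound \eqref{B_i over B'} each of the three coefficients is at most $(n-1)\|B^{1/(n-1)}\|_{C^{0,1}}B^{-1/(n-1)}$ in modulus, so using $|u_i|\le\sup_\Omega|Du|$, $\sum_k|u_{ik}|\le CM$, and collecting constants yields \eqref{D_itildeB}.

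For (ii), take $\xi=e_i$ in \eqref{DijB}. The term $\frac{B_{p_k}}{B}D_{ii}u_k=\tilde B_{p_k}D_{ii}u_k$ is kept untouched, as the last term of \eqref{D_ijtildeB}. The two terms $\frac{B_{p_kp_l}}{B}u_{ik}u_{il}-\frac{B_{p_k}B_{p_l}}{B^2}u_{ik}u_{il}$ combine to the quadratic form $D_{pp}\tilde B$ evaluated on the vector $(u_{i1},\dots,u_{in})$, so by the hypothesis \eqref{DppB} their sum is $\ge-C_B\sum_k u_{ik}^2\ge-C'M^2$, with $C'$ depending on $C_B$ and $A$. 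Every remaining term of \eqref{DijB} is estimated from below individually, and falls into three classes. (a) Terms carrying a factor among $B_{ii}/B$, $B_{zz}/B$, $B_{iz}/B$, $B_{ip_l}/B$, $B_{zp_k}/B$: by \eqref{Bij over B} these are bounded by $CB^{-1/(n-1)}$ times a bounded quantity or at most one factor $\sum_k|u_{ik}|$, hence are $\ge-CMB^{-1/(n-1)}$. (b) Terms of the form $B^{-2}B_aB_b$ with $a,b\in\{x_i,z,p_l\}$, possibly multiplied by $u_i$, $u_i^2$ or by at most one factor $\sum_l|u_{il}|$: here one invokes the sharper $C^{1,1}$ bound \eqref{B_i over B}, so each first-order factor is $\le CB^{-1/(2(n-1))}$; two of them multiply to $CB^{-1/(n-1)}$, and the whole term is again $\ge-CMB^{-1/(n-1)}$. (c) The single remaining term $\frac{B_z}{B}u_{ii}$: by \eqref{B_i over B} and $|u_{ii}|\le\max_j|\tilde u_{ij}|+\|A\|_{C^0}\le CM$ it is $O(B^{-1/(2(n-1))}M)$, and Young's inequality $CB^{-1/(2(n-1))}M\le\tfrac12 C^2B^{-1/(n-1)}+\tfrac12 M^2$ absorbs it into $-CMB^{-1/(n-1)}-C'M^2$. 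Adding the contributions of (a)--(c), the $D_{pp}\tilde B$ term and the kept term produces \eqref{D_ijtildeB}.

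I expect the only delicate point to be the bookkeeping of which of the two bounds from Lemma~\ref{Lemma 1} each term requires: a term that must be combined with a potentially large Hessian factor $u_{ii}$, or that is a product of two first-order $B$-factors, needs the half-power decay $B^{-1/(2(n-1))}$ of \eqref{B_i over B} — squaring two half-powers gives the correct full power $B^{-1/(n-1)}$, and Young's inequality then separates the lone leftover term cleanly into a $B^{-1/(n-1)}$ piece and an $M^2$ piece — whereas terms already carrying a second derivative of $B$ come with the full power $B^{-1/(n-1)}$ directly from \eqref{Bij over B}. Beyond this distinction, the argument is a routine, if somewhat lengthy, term-by-term inspection of \eqref{DijB}.
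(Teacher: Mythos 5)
Your proposal is correct and follows essentially the same route as the paper: specialise \eqref{DiB} and \eqref{DijB} to $\xi=e_i$, bound each term via Lemma \ref{Lemma 1} and Remark \ref{Rm2.1} together with $u_{ik}=\tilde u_{ik}+A_{ik}$, and combine the two $p$-Hessian contributions of $B$ into the quadratic form $D_{pp}\tilde B$ controlled by \eqref{DppB}, keeping $\sum_k\tilde B_{p_k}D_{ii}u_k$ intact. The only cosmetic difference is the lone term $B_zD_{ii}u/B$, which the paper absorbs directly into the $\left[1+\max_j|\tilde u_{ij}|\right]B^{-1/(n-1)}$ class using the full-power bound \eqref{B_i over B'}, whereas you use the half-power bound \eqref{B_i over B} plus Young's inequality; both are valid.
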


\begin{proof}
Choosing $\xi=e_i$ in \eqref{DiB}, we have, for $i=1,\cdots, n$, 
\begin{equation}\label{Coro1}
D_i\tilde{B}=\frac{B_i+B_zD_i u+B_{p_k}D_{i}u_k}{B}.
\end{equation}
It follows from \eqref{B_i over B'} that 
\begin{equation}\label{Coro2}
\left |\frac{B_i+B_zD_i u}{B}\right | \leq C\left (\left |\frac{B_i}{B}\right |+\left |\frac{B_z}{B}\right |\right)
\leq CB^{-\frac{1}{n-1}},
\end{equation}
where the constant $C$ depends on $n, \|B^{\frac{1}{n-1}} \|_{C^{0,1}}$ and $\sup\limits_{\Omega}|Du|$. Since $\tilde{u}_{ij}=u_{ij}-A_{ij}$, we obtain
\begin{equation}\label{Coro3}
\left |\frac{B_{p_k}D_{i}u_k}{B}\right |
\leq C\left [1+\max_{j}\left(|\tilde{u}_{ij}|\right )\right ] B^{-\frac{1}{n-1}},
\end{equation}
where the constant $C$ depends on $n, \|B^{\frac{1}{n-1}} \|_{C^{0,1}}$ and $A$. Combining \eqref{Coro1}, \eqref{Coro2} and \eqref{Coro3}, we get \eqref{D_itildeB} and finish the proof of conclusion (i).

Next, we turn to prove (ii). It follows from \eqref{B_i over B} and \eqref{Bij over B} that, for $i=1,\cdots, n$, 
\begin{equation}\label{Coro4}
\begin{split}
     & \left|\frac{B_{ii}+B_{zz}(D_i u)^2+B_z D_{ii}u +2B_{iz}D_i u+2B_{i p_l}D_{i}u_l+2B_{zp_k}D_{i}uD_{i}u_k}{B}\right |\\
\le & C\left [1+\max_j\left(|{u}_{ij}|\right)\right ] B^{-\frac{1}{n-1}} \le C\left [1+\max_j\left(|{\tilde u}_{ij}|\right)\right ] B^{-\frac{1}{n-1}},
\end{split}
\end{equation}
and
\begin{equation}\label{Coro5}
\begin{split}
    & \left |\frac{B_{i}^2+B_z^2u_{i}^2+2B_{i}B_zu_{i}+2B_{i}B_{p_l}D_{i}u_l+2B_zB_{p_l}u_{i}D_{i}u_l}{B^2}\right | \\
\le & C\left [1+\max_j\left(|{u}_{ij}|\right)\right ] B^{-\frac{1}{n-1}} \le C\left [1+\max_j\left(|{\tilde u}_{ij}|\right)\right ] B^{-\frac{1}{n-1}},
\end{split}
\end{equation}
where the constants $C$ depend on $n, \|B^{\frac{1}{n-1}} \|_{C^{1,1}}, A$ and $\sup\limits_{\Omega}|Du|$.
By the condition \eqref{DppB}, we have 
\begin{equation}\label{Coro6}
\begin{split}
\frac{B_{p_kp_l}B-B_{p_k}B_{p_l}}{B^2}u_{il}u_{ik}&\ge - C_B \delta_{kl} (\tilde u_{il}+A_{il})(\tilde u_{ik}+A_{ik})\\
&\geq -C^\prime\left [1+\max_j\left(|\tilde{u}_{ij}|\right)\right ]^2,
\end{split}
\end{equation}
where $\delta_{kl}$ denotes the usual Kronecker delta, the constant $C^\prime$ depends on  $C_B$ and $A$.
Taking $\xi=e_1$ in \eqref{DijB}, and using \eqref{Coro4}, \eqref{Coro5} and \eqref{Coro6}, we get \eqref{D_ijtildeB} and finish the proof of conclusion (ii).
\end{proof}
\begin{remark}
We remark that $\tilde B=\log B$ satisfies the condition \eqref{DppB}, if it is semi-convex in $p$.
The term $\sum_{k=1}^n \tilde B_{p_k}D_{ii}u_k$ on the right hand side of \eqref{D_ijtildeB} can also be dealt with in the later discussion.
\end{remark}

By the equation \eqref{EQUATION}, we can build the relationship between $B^{-\frac{1}{n-1}}$ and $\sum_{i=1}^n\tilde{u}^{ii}$, $(\tilde{u}^{ii})=(\tilde{u}_{ii})^{-1}$, if $\tilde{u}_{11} >1$. Therefore, a suitable barrier function is necessary to control the term $C\sum_{i=1}^n\tilde{u}^{ii}$. We introduce the following barrier construction lemma under the A3w condition, which is a variant of Lemma 2.1(ii) in \cite{JT-oblique-II} when the operator $\mathcal{F}$ is given by ``$\log \det$''. Similar versions of such a lemma can also be found in \cite{JT-Pog, JTY}.

\begin{Lemma}\label{Lemma 2}
Let $u\in C^2(\bar\Omega)$ be an elliptic solution of the equation \eqref{EQUATION} and $\underline u\in C^2(\bar \Omega)$ be a strict subsolution of the equation \eqref{EQUATION} satisfying \eqref{sub sol}.
Assume that $A\in C^2(\bar{\Omega}\times \mathbb{R}\times\mathbb{R}^n)$ satisfies the A3w condition, $B\in C^2(\bar{\Omega}\times \mathbb{R}\times\mathbb{R}^n)$ is a positive function satisfying \eqref{DppB}. Then the inequality
\begin{equation}\label{barrier inequality}
L\left [e^{\kappa (\underline u-u)}\right ]\geq \varepsilon_1\sum_{i=1}^n\tilde{u}^{ii}-C,
\end{equation}
holds in $\Omega$ for sufficiently large positive constant $\kappa$ and uniform positive constants $\varepsilon_1$ and $C$, where
\begin{equation}\label{linearized op}
L=\sum_{i, j=1}^{n}\tilde{u}^{ij}\left (D_{ij}-\sum_{k=1}^nD_{p_k}A_{ij}(x, u, Du)D_k\right )-\sum_{k=1}^{n}\tilde B_{p_k}D_k.
\end{equation}
\end{Lemma}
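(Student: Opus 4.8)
The plan is to compute $L\left[e^{\kappa(\underline u-u)}\right]$ directly and isolate a positive multiple of $\sum_i \tilde u^{ii}$ out of it, absorbing all other terms into a uniform constant $C$ by choosing $\kappa$ large. Writing $\eta := \underline u - u$, so that $D_{ij}e^{\kappa\eta} = \kappa e^{\kappa\eta}(D_{ij}\eta + \kappa D_i\eta\, D_j\eta)$, the operator $L$ applied to $e^{\kappa\eta}$ yields
\begin{equation*}
L\left[e^{\kappa\eta}\right] = \kappa e^{\kappa\eta}\left(\tilde u^{ij}\bigl[D_{ij}\eta - D_{p_k}A_{ij}(x,u,Du)D_k\eta\bigr] - \tilde B_{p_k}D_k\eta + \kappa\,\tilde u^{ij}D_i\eta\, D_j\eta\right).
\end{equation*}
The last term $\kappa\,\tilde u^{ij}D_i\eta D_j\eta$ is nonnegative since $\{\tilde u^{ij}\}>0$; the term $-\tilde B_{p_k}D_k\eta$ is bounded by a constant depending on $\sup|Du|$, $\sup|D\underline u|$ and, via Lemma~\ref{Lemma 1} (or rather Corollary~\ref{Coro2.1}(i)) — actually here one just needs $|\tilde B_p|$ controlled, which one gets from the $C^{0,1}$ bound times $B^{-1/(n-1)}$; this is the one place a little care is needed since $B^{-1/(n-1)}$ is not a priori bounded, but it multiplies no matrix entry so it can be absorbed using the equation as discussed below. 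The key is the first bracketed term.

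The heart of the matter is to exploit that $\underline u$ is a \emph{strict} subsolution. I would first rewrite $\tilde u^{ij}\bigl[D_{ij}\eta - D_{p_k}A_{ij}(x,u,Du)D_k\eta\bigr]$ in terms of the two augmented Hessians with the \emph{same} gradient argument $Du$: adding and subtracting $A_{ij}(x,u,Du)$ we get $\tilde u^{ij}\bigl[M_{ij}[\underline u]_{(Du)} - \tilde u_{ij}\bigr] = \tilde u^{ij}M_u'[\underline u]_{ij} - n$, where $M_u'[\underline u]_{ij} := D_{ij}\underline u - D_{p_k}A_{ij}(x,u,Du)D_k\underline u - A_{ij}(x,u,Du) + \text{(linearization correction)}$. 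More precisely, the standard move (cf.\ \cite{JT-Pog}) is: by the A3w condition $A_{ij}$ is concave in $p$ along directions, so $A_{ij}(x,u,D\underline u) \le A_{ij}(x,u,Du) + D_{p_k}A_{ij}(x,u,Du)(D_k\underline u - D_k u) + C|D\underline u - Du|^2$ — wait, A3w gives the opposite sign convexity; one uses instead that $-A_{ij}$ is the relevant quantity and the correct inequality is $M_u[\underline u]_{ij} \le D_{ij}\underline u - D_{p_k}A_{ij}(x,u,Du)D_k\underline u - A_{ij}(x,u,Du) + (\text{A3w term})$, so that contracting with the positive matrix $\{\tilde u^{ij}\}$, using that $A$ regular controls the $D_{pp}A$ correction, and using the subsolution condition $\det M_u[\underline u] > B = \det\{\tilde u_{ij}\}$, one obtains by the matrix arithmetic–geometric-type inequality (the inequality $\tilde u^{ij}N_{ij} \ge n(\det N/\det\tilde u_{ij})^{1/n}$ for $N = M_u[\underline u] > 0$)
\begin{equation*}
\tilde u^{ij}\bigl[D_{ij}\underline u - D_{p_k}A_{ij}(x,u,Du)D_k\underline u\bigr] \ge \tilde u^{ij}A_{ij}(x,u,Du) + n\left(\frac{\det M_u[\underline u]}{B}\right)^{1/n} - C\sum_i\tilde u^{ii},
\end{equation*}
and combining with $-\tilde u^{ij}D_{ij}u = -\tilde u^{ij}(\tilde u_{ij} + A_{ij}) = -n - \tilde u^{ij}A_{ij}$ the $A_{ij}$ terms cancel and a strictly positive lower bound $n(\det M_u[\underline u]/B)^{1/n} - n \ge \varepsilon_0 > 0$ survives from strictness of the subsolution. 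The remaining $-C\sum_i \tilde u^{ii}$ must itself be dominated, which is where the $\kappa^2$-type gain from the $\kappa\,\tilde u^{ij}D_i\eta D_j\eta$ term, together with a Cauchy–Schwarz splitting of the $-\tilde B_{p_k}D_k\eta$ and first-order $D\eta$ terms against $\sum\tilde u^{ii}$, is used: more standardly, one does not get $+\varepsilon_1\sum\tilde u^{ii}$ from this step but must bring it in from the linearized equation.

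Thus the final ingredient I would invoke is the differentiated equation \eqref{Differentiation of the equation once}–\eqref{Differentiation of the equation twice} in the form that gives $\tilde u^{ij}D_{ij}(\text{something}) \gtrsim \sum\tilde u^{ii}$ — but that is not available for $e^{\kappa\eta}$ alone; rather, the $\varepsilon_1\sum_i\tilde u^{ii}$ on the right of \eqref{barrier inequality} comes from choosing $\kappa$ large in the term $\kappa\,\tilde u^{ij}D_i\eta D_j\eta$ only on the set where $|D\eta|$ is bounded below, and on the complementary set (near where $\underline u$ touches $u$) one instead uses that $\underline u - u$ being a strict subsolution difference forces a quantitative positivity — concretely, following \cite{JT-Pog,JTY}, I would split $\Omega$ according to whether $\tilde u^{11} \le \Lambda$ (then $\sum\tilde u^{ii}$ is bounded and the inequality is trivial with $C$ large) or $\tilde u^{11} > \Lambda$, and in the latter case use $\det\tilde u_{ij} = B$ to write $\tilde u^{11} \le (\prod_{i\ge 2}\tilde u_{ii})/B \le C(\sum_{i\ge2}\tilde u_{ii})^{n-1}/B$ and bound $\tilde u^{ii}$-sums against the barrier's Hessian term. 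The main obstacle I anticipate is precisely the bookkeeping of the $B^{-1/(n-1)}$-weighted first-order terms $\tilde B_{p_k}D_k\eta$: since $B$ may be small, these are not bounded, and one must show they are absorbed either by the $\kappa^2$ term $\kappa\tilde u^{ij}D_i\eta D_j\eta$ (giving a $B^{-1/(n-1)}$ against a genuine quadratic gain) or, as in Corollary~\ref{Coro2.1}, by $\varepsilon_1\sum\tilde u^{ii}$ after using $B^{-1/(n-1)} \le C\sum\tilde u^{ii}$ which follows from the equation when $\tilde u_{11}>1$. Managing the interplay of $\kappa$, $\varepsilon_1$, and the threshold $\Lambda$ so that all error terms with negative sign are dominated — in particular the $-C\sum\tilde u^{ii}$ from the A3w/$D_{pp}A$ correction — is the delicate quantitative step; everything else is Taylor expansion and the arithmetic–geometric mean inequality for the positive matrices $\{\tilde u^{ij}\}$ and $M_u[\underline u]$.
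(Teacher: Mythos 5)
Your overall strategy---expand $L[e^{\kappa\eta}]$ with $\eta=\underline u-u$, use concavity of $\log\det$ against the strict subsolution, and absorb the A3w Taylor remainder of $A$ into the $\kappa\,\tilde u^{ij}D_i\eta D_j\eta$ term---is the right skeleton; it is essentially a reconstruction of Lemma~2.1(ii) of \cite{JT-oblique-II}, which the paper simply quotes in the form \eqref{LM2.2(1)}. But there are two genuine gaps, both concerning the right-hand side $B$, which is the only place where this lemma differs from the nondegenerate versions.

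First, your claim that ``$n(\det M_u[\underline u]/B)^{1/n}-n\ge\varepsilon_0>0$ survives from strictness of the subsolution'' is not justified: the strictness condition \eqref{sub sol} compares $\det M_u[\underline u]$ with $B(\cdot,u,D\underline u)$, whereas the $B$ in your denominator is $\det\{\tilde u_{ij}\}=B(\cdot,u,Du)$. Since $B$ may be arbitrarily small and depends on $p$, the ratio $B(\cdot,u,D\underline u)/B(\cdot,u,Du)$ has no positive lower bound, so the concavity step leaves behind the uncontrolled difference $\tilde B(\cdot,u,D\underline u)-\tilde B(\cdot,u,Du)$---exactly the residual term appearing in \eqref{LM2.2(1)}. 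Second, and relatedly, you never invoke hypothesis \eqref{DppB}, which is precisely what closes this gap: that residual difference must be grouped with the first-order term $-\sum_k\tilde B_{p_k}D_k\eta$ contributed by $L$, yielding the second-order Taylor remainder $\tfrac12\sum_{k,l}\tilde B_{p_kp_l}(\cdot,u,\hat p)D_k\eta\,D_l\eta\ge-\tfrac12 C_B|D(\underline u-u)|^2$, a bounded quantity, as in \eqref{LM2.2(2)}. Your alternative---bounding $|\tilde B_p|$ by $CB^{-1/(2(n-1))}$ and absorbing via $B^{-1/(n-1)}\le C\sum_i\tilde u^{ii}$---does not work pointwise: the inequality \eqref{Buii} requires the largest eigenvalue of $\{\tilde u_{ij}\}$ to be at least $1$, which may be assumed at the special maximum points in the proofs of Theorems \ref{Th1.1} and \ref{Th1.3} but cannot be assumed throughout $\Omega$, where \eqref{barrier inequality} must hold. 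A minor further point: the term $\varepsilon_1\sum_i\tilde u^{ii}$ does not require your case distinction on the size of $|D\eta|$; it comes directly from the uniform bound $M_u[\underline u]\ge\epsilon_0 I$ (ellipticity of $\underline u$ with respect to $u$ is part of the definition of strict subsolution here) contracted with $\{\tilde u^{ij}\}$, combined with a convex-combination splitting of $\tilde u^{ij}M_u[\underline u]_{ij}$ between this lower bound and the determinant comparison.
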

\begin{proof}
Since $\underline u$ is a strict subsolution satisfying \eqref{sub sol}, by taking $\mathcal{F}=\log \det$ in Lemma 2.1(ii) in \cite{JT-oblique-II}, following (2.17) in \cite{JT-oblique-II} we have
\begin{equation}\label{LM2.2(1)}
L \left [e^{\kappa (\underline u-u)}\right ] \ge  \varepsilon_1\left (\sum_{i=1}^n\tilde{u}^{ii} + 1\right ) + C\left [\tilde B(\cdot, u, D\underline u)-\tilde B(\cdot,u, Du)-\sum_{k=1}^n\tilde B_{p_k}(\cdot, u, Du)D_k(\underline u-u)\right ],
\end{equation}
for large positive constant $\kappa$ and uniform positive constant $\varepsilon_1$. By Taylor's formula and the condition \eqref{DppB}, we have
\begin{equation}\label{LM2.2(2)}
\begin{array}{rl}
  & \displaystyle \tilde B(\cdot, u, D\underline u)-\tilde B(\cdot,u, Du)-\sum_{k=1}^n\tilde B_{p_k}(\cdot, u, Du)D_k(\underline u-u) \\
=& \displaystyle \frac{1}{2} \sum_{k,l=1}^n \tilde B_{p_kp_l} (\cdot, u, \hat p) D_k(\underline u-u)D_l(\underline u-u) \\
\ge & \displaystyle - \frac{1}{2}C_B |D(\underline u-u)|^2, 
\end{array}
\end{equation}
where $\hat p=\theta Du + (1-\theta)D\underline u$ with $\theta \in (0,1)$. Then the estimate \eqref{barrier inequality} can be obtained by combining \eqref{LM2.2(1)} and \eqref{LM2.2(2)}.
\end{proof}
In Lemma \ref{Lemma 2}, if the A3w condition holds without orthogonality,  the inequality {barrier inequality} still holds by replacing the barrier function $e^{\kappa (\underline u-u)}$ with $\kappa(\underline u-u)$.
Note also that if $C_B=0$ in condition \eqref{DppB}, namely $\tilde B$ is convex in $p$, then the barrier inequality \eqref{barrier inequality} can be replaced by
\begin{equation}
L \left [e^{\kappa (\underline u-u)}\right ] \ge  \varepsilon_1\left (\sum_{i=1}^n\tilde{u}^{ii} + 1\right ),
\end{equation}
since the second term on the right hand side of \eqref{LM2.2(1)} is nonnegative in this case. 

\vspace{3mm}

\section{Interior regularity for the DMATE \eqref{EQUATION} under the A3 condition}\label{Section 3}
In this section, by constructing an auxiliary function, we obtain interior second order derivative estimates for the Monge-Amp\`ere type equation \eqref{EQUATION} under the A3 condition and $B>0$. We then use the estimates to obtain the interior regularity for the solution of the DMATE \eqref{EQUATION}.

\begin{proof}[Proof of Theorem \ref{Th1.1}]
We employ the auxiliary function
\begin{equation}\label{G}
G(x,\xi)=\eta^2(x)\tilde{u}_{\xi\xi},
\end{equation}
where $\eta$ is a cut-off function in $\Omega$, $0 \leq \eta \leq 1$, $\tilde{u}_{\xi\xi}=\tilde{u}_{ij}\xi_i\xi_j$, $\tilde{u}_{ij}=u_{ij}-A_{ij}(x,u,Du)$ and $\xi\in \mathbb{R}^n$ is a unit vector. We may assume that $G$ attains its maximum at $x_0\in \Omega$ and $\xi=\xi_0$. Without loss of generality, we may assume $\{\tilde{u}_{ij}\}$ is diagonal at $x_0$ and $\xi_0=e_1$. Then the function
\begin{equation}\label{G MAX}
G(x,\xi_0)=\eta^2(x)\tilde{u}_{11}
\end{equation}
attains its maximum at $x_0$. Denoting
\begin{equation}\label{G MAX LOGARITHM}
\tilde{G}(x):=\log G(x,\xi_0)
=2 \log \eta+\log \tilde{u}_{11},
\end{equation}
then $\tilde G(x)$ also attains its maximum at $x_0$.
At $x_0$, we have
\begin{equation}\label{LOG G}
\begin{split}
\tilde{G}_i&=2\frac{\eta_i}{\eta}+\frac{D_i\tilde{u}_{11}}{\tilde{u}_{11}}=0,\\
\tilde{G}_{ij}&=2\frac{\eta_{ij}}{\eta}-2\frac{\eta_i\eta_j}{\eta^2}+\frac{D_{ij}\tilde{u}_{11}}{\tilde{u}_{11}}
-\frac{D_i\tilde{u}_{11}D_j\tilde{u}_{11}}{\tilde{u}_{11}^2}\\
&=2\frac{\eta_{ij}}{\eta}-6\frac{\eta_i\eta_j}{\eta^2}+\frac{D_{ij}\tilde{u}_{11}}{\tilde{u}_{11}},\\
\end{split}
\end{equation}
for $i,j=1,\cdots, n$, and the matrix $\{\tilde G_{ij}\} \le 0$.
From now on, we assume all the calculations are taken at $x_0$. 
Then it follows from $\{\tilde{u}^{ij} \}\geq 0$, $\tilde{u}_{11} \geq 0$ and the first equality of \eqref{LOG G} that
\begin{equation}\label{geq 0}
\begin{split}
0 &\geq \tilde{u}_{11}L\tilde{G}=\tilde{u}_{11}\sum_{i,j=1}^n\tilde{u}^{ij}D_{ij}\tilde{G}\\
&=\tilde{u}_{11}\sum_{i,j=1}^n\tilde{u}^{ij}\left [2\frac{\eta_{ij}}{\eta}-6\frac{\eta_i\eta_j}{\eta^2}+\frac{D_{ij}\tilde{u}_{11}}{\tilde{u}_{11}}\right ]\\
&\geq -C\frac{\tilde{u}_{11}}{\eta^2}\sum_{i=1}^n\tilde{u}^{ii}+\sum_{i,j=1}^n\tilde{u}^{ij}D_{ij}\tilde{u}_{11},
\end{split}
\end{equation}
where $L$ is the linearized operator defined in \eqref{linearized op}.
Recalling that $\tilde{u}_{11}=u_{11}-A_{11}$, we obtain
\begin{equation}\label{u11ij}
\begin{split}
    & \sum_{i,j=1}^n\tilde u^{ij}D_{ij}\tilde{u}_{11}=\sum_{i,j=1}^n\tilde u^{ij}D_{ij}(u_{11}-A_{11})\\
    \ge& \sum_{i,j,k,l=1}^n\tilde u^{ij} [u_{11ij} -(D_{p_k}A_{11})u_{kij} -(D_{p_kp_l}A_{11})u_{ki}u_{lj}] - C\left (1+\sum_{i=1}^n\tilde u^{ii}\right ),
\end{split}
\end{equation}
where $C$ is a constant depending on $A$ and $\sup_\Omega|Du|$.
By a direct computation, we have
\begin{equation}\label{uij11}
\begin{split}
     &\sum_{i,j=1}^n\tilde u^{ij} u_{11ij} = \sum_{i,j=1}^n\tilde u^{ij} D_{11}u_{ij} = \sum_{i,j=1}^n\tilde u^{ij} D_{11}(\tilde u_{ij} + A_{ij})  \\
 \ge & \sum_{i,j,k,l=1}^n \tilde u^{ij} [D_{11}\tilde{u}_{ij}+(D_{p_k}A_{ij})u_{k11} +(D_{p_kp_l}A_{ij})u_{k1}u_{l1}] -C\left (1+\sum_{i=1}^n\tilde u^{ii}\right ),
\end{split}
\end{equation}
where $C$ is a constant depending on $A$ and $\sup_\Omega|Du|$. By differentiating equation \eqref{EQUATION'} in the direction $\xi\in \mathbb{R}^n$ once and twice, we get
\begin{equation}\label{once dif}
\tilde u^{ij} D_\xi \tilde u_{ij} = D_\xi \tilde B,
\end{equation}
and 
\begin{equation}\label{twice dif}
\tilde u^{ij} D_{\xi\xi} \tilde u_{ij} \ge  D_{\xi\xi} \tilde B.
\end{equation}
Here the inequality \eqref{twice dif} is obtained by using the concavity of ``$\log \det$''. 
Inserting \eqref{u11ij} and \eqref{uij11} into \eqref{geq 0}, we have
\begin{equation}\label{0 ge ...}
  0 \ge  -C\frac{\tilde{u}_{11}}{\eta^2}\sum_{i=1}^n\tilde{u}^{ii}+D_{11}\tilde{B}-CD_1\tilde{B} +\sum_{i,j,k,l=1}^n\tilde{u}^{ij}[(D_{p_kp_l}A_{ij})u_{k1}u_{l1}-(D_{p_kp_l}A_{11})u_{ki}u_{lj}],
\end{equation}
where \eqref{once dif}, \eqref{twice dif} and the first equality in \eqref{LOG G} are used to deal with the terms $\sum_{i,j,k=1}^n\tilde{u}^{ij}(D_{p_k}A_{11})u_{kij}$, $\sum_{i,j=1}^n \tilde u^{ij} D_{11}\tilde{u}_{ij}$ and $\sum_{i,j,k=1}^n\tilde{u}^{ij}(D_{p_k}A_{ij})u_{k11}$ respectively, the terms $- C\left (1+\sum_{i=1}^n\tilde u^{ii}\right )$ in \eqref{u11ij} and \eqref{uij11} are absorbed in the first term on the right hand side of \eqref{0 ge ...} since we can always assume $\tilde u_{11}$ and $\sum_{i=1}^n \tilde u^{ii}$ as large as we want.
Next, we estimate the last term in \eqref{0 ge ...}. Since both $\{\tilde{u}^{ij}\}$ and $\{\tilde{u}_{ij}\}$ are diagonal at $x_0$, we get
\begin{equation}
\begin{split}
 &\sum_{i,j,k,l=1}^n\tilde{u}^{ij}[(D_{p_kp_l}A_{ij})u_{k1}u_{l1}-(D_{p_kp_l}A_{11})u_{ki}u_{lj}]\\
=&\sum_{i\neq 1}\sum_{k,l=1}^n\tilde{u}^{ii}[(D_{p_kp_l}A_{ii})u_{k1}u_{l1}-(D_{p_kp_l}A_{11})u_{ki}u_{li}]\\
=&\sum_{i\neq 1}\sum_{k,l=1}^n\tilde{u}^{ii}[D_{p_kp_l}A_{ii}(\tilde{u}_{k1}+A_{k1})(\tilde{u}_{l1}+A_{l1})-D_{p_kp_l}A_{11}(\tilde{u}_{ki}+A_{ki})(\tilde{u}_{li}+A_{li})]\\
\ge & \sum_{i\neq 1}\tilde{u}^{ii}(D_{p_1p_1}A_{ii})\tilde{u}_{11}^2-C\sum_{i=1}^n\tilde{u}^{ii}\tilde{u}_{11}.\\
\end{split}
\end{equation}
Using \eqref{D_itildeB} and \eqref{D_ijtildeB} in Corollary \ref{Coro2.1},  then \eqref{0 ge ...} becomes
\begin{equation}\label{LG1}
\begin{split}
0&\geq \sum_{i\neq 1}\tilde{u}^{ii}(D_{p_1p_1}A_{ii})\tilde{u}_{11}^2
-C\frac{\tilde{u}_{11}}{\eta^2}\sum_{i=1}^n\tilde{u}^{ii}+D_{11}\tilde{B}-CD_{1}\tilde{B}\\
& \geq \sum_{i\neq 1}\tilde{u}^{ii}(D_{p_1p_1}A_{ii})\tilde{u}_{11}^2
-C\frac{\tilde{u}_{11}}{\eta^2}\sum_{i=1}^n\tilde{u}^{ii}-C\tilde{u}_{11}B^{-\frac{1}{n-1}}-C\tilde{u}_{11}^2 + \sum_{k=1}^n \tilde B_{p_k}D_{11}u_k\\
& \geq \sum_{i\neq 1}\tilde{u}^{ii}(D_{p_1p_1}A_{ii})\tilde{u}_{11}^2
-C\frac{\tilde{u}_{11}}{\eta^2}\sum_{i=1}^n\tilde{u}^{ii}-C\frac{\tilde{u}_{11}}{\eta}B^{-\frac{1}{n-1}}-C\tilde{u}_{11}^2,
\end{split}
\end{equation}
where the third order derivative term $\sum_{k=1}^n \tilde B_{p_k}D_{11}u_k$ is treated by using \eqref{B_i over B'} and the first equality in \eqref{LOG G}. 
Note that the constant $C$ changes from line to line in the context.
Since $\tilde{u}_{11}\ge 1$, we can get
\begin{equation}\label{Buii}
\begin{split}
\sum_{i=1}^n\tilde{u}^{ii}&\geq \sum_{i=2}^n\tilde{u}^{ii}
\geq (n-1)\left (\prod_{i=2}^n\tilde{u}^{ii}\right )^{\frac{1}{n-1}}\\
&=(n-1)\left (\prod_{i=1}^n\tilde{u}^{ii}\right )^{\frac{1}{n-1}}(\tilde{u}_{11})^{\frac{1}{n-1}}\\
&\geq(n-1)B^{-\frac{1}{n-1}}.
\end{split}
\end{equation}
Plugging \eqref{Buii} into \eqref{LG1}, we obtain
\begin{equation}\label{LG}
0\geq \sum_{i\neq 1}\tilde{u}^{ii}D_{p_1p_1}A_{ii}\tilde{u}_{11}^2
-C\frac{\tilde{u}_{11}}{\eta^2}\sum_{i=1}^n\tilde{u}^{ii}-C(\tilde{u}_{11})^2.
\end{equation}
By the A3 condition, choosing $\tilde{\xi}=\tilde{u}_{11}e_1$ and $\tilde{\eta}=\sum\limits_{i=2}^n\sqrt{\tilde{u}^{ii}}e_i$, we have
\begin{equation}\label{using A3}
\begin{split}
\sum_{i\neq 1}\tilde{u}^{ii}(D_{p_1p_1}A_{ii})\tilde{u}_{11}^2 &= \sum_{i,j,k,l=1}^n D^{2}_{p_kp_l}A_{ij}\tilde\xi_i\tilde \xi_j\tilde \eta_k\tilde \eta_l \\
&\geq c_0\tilde{u}_{11}^2\sum_{i=2}^n\tilde{u}^{ii} \\
&\geq \frac{1}{n}c_0\tilde{u}_{11}^2\sum_{i=2}^n\tilde{u}^{ii}+\frac{n-1}{n}c_0\tilde{u}_{11}^2\tilde{u}^{22}\\
&\geq \frac{1}{n}c_0\tilde{u}_{11}^2\sum_{i=2}^n\tilde{u}^{ii}+\frac{n-1}{n}c_0\tilde{u}_{11}^2\tilde{u}^{11}\\
&\geq \frac{1}{n}c_0\tilde{u}_{11}^2\sum_{i=1}^n\tilde{u}^{ii}.\\
\end{split}
\end{equation}
Without loss of generality, we assume 
\begin{equation}\label{wlg}
\frac{1}{2n}c_0\sum_{i=1}^n\tilde{u}^{ii}\geq C. 
\end{equation}
Otherwise we are done. Combining \eqref{LG}, \eqref{using A3} and \eqref{wlg}, we have
\begin{equation}
0 \geq \frac{1}{2n}c_0\tilde{u}_{11}^2\sum_{i=1}^n\tilde{u}^{ii}-C\frac{\tilde{u}_{11}}{\eta^2}\sum_{i=1}^n\tilde{u}^{ii},
\end{equation}
which leads to
\begin{equation}\label{estimates}
\eta^2\tilde{u}_{11} \leq C.
\end{equation}
We now complete the proof of Theorem \ref{Th1.1}.
\end{proof}

Note that the constant $C$ in \eqref{second order derivatives estimates} in Theorem \ref{Th1.1} is independent of the positive lower bound of $B$. Then the $C^{1,1}$ regularity result under the A3 condition, Theorem \ref{Th1.2}, follows directly from the interior estimates in Theorem \ref{Th1.1}. Here we omit the proof of Theorem \ref{Th1.2} since it is standard.

\vspace{3mm}


\section{Interior regularity for the DMATE \eqref{EQUATION} under the A3w$^+$ condition}\label{Section 4}

In this section, we prove the Pogorelov type estimate in Theorem \ref{Th1.3} under the A3w$^+$ condition and suitable barrier conditions, which can be applied to the interior $C^{1,1}$ regularity for solutions of the DMATE \eqref{EQUATION} in Theorem \ref{Th1.4}.

\begin{proof}[Proof of Theorem \ref{Th1.3}.]
First we note that under either (i) or (ii), we have
\begin{equation}\label{both barriers}
L \varphi \ge \varepsilon_1 \sum_{i=1}^n \tilde u^{ii} - C,
\end{equation}
for some positive constants $\varepsilon_1$ and $C$. In case (i), $\varphi$ is the function in the $A$-boundedness condition \eqref{A boundedness}, and \eqref{both barriers} with $\varepsilon_1=1$ can be calculated directly from \eqref{A boundedness}. While in case (ii), the inequality \eqref{both barriers} with $\varphi=e^{\kappa(\underline u-u)}$ is proved in \eqref{barrier inequality} in Lemma \ref{Lemma 2}.

We construct the auxiliary function
\begin{equation}\label{barrier function h}
h(x, \xi)=\eta^{\alpha}\tilde{u}_{\xi\xi}e^{\frac{1}{2}\beta|Du|^2+\gamma\varphi},
\end{equation}
where $\varphi$ is the barrier function in \eqref{both barriers}, $\tilde{u}_{\xi\xi}=\tilde{u}_{ij}\xi_i\xi_j$, $\xi=(\xi_1, \cdots, \xi_n)$ and $|\xi|=1$, $\tilde{u}_{ij}=u_{ij}-A_{ij}$, $\eta= w-u$ and $\alpha, \beta, \gamma$ are positive constants to be determined.

Since $h \geq 0$ in $\Omega$ and $h=0$ on $\partial\Omega$, we may assume that $h$ attains its maximum at the point $\bar{x}\in \Omega$ and some unit vector $\bar{\xi}$. We may assume $u(\bar{x}) < w(\bar{x})$, namely $\eta(\bar x)>0$. By taking the logarithm of $h$, we obtain
\begin{equation}\label{barrier function bar h}
\bar{h}(x,\xi):= \log h(x, \xi)=\alpha\log \eta+\log(\tilde{u}_{ij}\xi_i\xi_j)+\frac{1}{2}\beta|Du|^2+\gamma \varphi.
\end{equation}
Thus, $\bar{h}$ also attains its maximum at the point $\bar{x}\in \Omega$ and the vector $\bar{\xi}$.
We may assume that $\bar{\xi}=(1,0,\cdots,0)$ and $\{\tilde{u}_{ij}\}$ is diagonal at $\bar{x}$. We define
\begin{equation}\label{barrier function}
\begin{split}
v(x)&:=\bar{h}(x,\xi)|_{\xi=\bar{\xi}}\\
&=\alpha\log \eta+\log(\tilde{u}_{11})+\frac{1}{2}\beta|Du|^2+\gamma \varphi.
\end{split}
\end{equation}
Since $\bar{x}$ is also the maximum point of $v$, we have
\begin{equation}\label{Dv}
 Dv(\bar{x})=0,
\end{equation}
and
\begin{equation} \label{D2v}
D^2v(\bar{x})\leq 0.
\end{equation}
It follows from \eqref{Dv}, \eqref{D2v} and $\{\tilde{u}^{ij}\} \geq 0$ that
\begin{equation}\label{Lv0}
Lv(\bar{x})\leq 0,
\end{equation}
where $L$ is the linearized operator defined in \eqref{linearized op}. By a direct computation, we have, at $\bar{x}$,
\begin{equation}\label{first derivative of v}
D_iv=\frac{\alpha D_i\eta}{\eta}+\frac{D_i\tilde{u}_{11}}{\tilde{u}_{11}}+\beta D_kuD_{ki}u+\gamma D_i\varphi,
\end{equation}
and
\begin{equation}\label{second derivative of v}
\begin{split}
D_{ii}v=&\frac{\alpha D_{ii}\eta}{\eta}-\frac{\alpha(D_i\eta)^2}{\eta^2}+\frac{D_{ii}\tilde{u}_{11}}{\tilde{u}_{11}}-\frac{(D_{i}\tilde{u}_{11})^2}{\tilde{u}_{11}^2}\\
&+\beta\sum_{i,k=1}^{n}\left ((D_{ik}u)^2+(D_ku)D_{iik}u\right )+\gamma D_{ii} \varphi,
\end{split}
\end{equation}
for $i=1,\cdots,n$.
Inserting \eqref{first derivative of v} and \eqref{second derivative of v} into \eqref{Lv0}, we get
\begin{equation}\label{expression of Lv}
\begin{split}
0\ge & Lv(\bar{x})\\
=&\frac{\alpha}{\eta}L\eta-\frac{\alpha}{\eta^2}\sum_{i=1}^n\tilde{u}^{ii}(D_i\eta)^2+\frac{1}{\tilde{u}_{11}}L\tilde{u}_{11}
-\frac{1}{\tilde{u}_{11}^2}\sum_{i=1}^n\tilde u^{ii}(D_i\tilde{u}_{11})^2\\
&+\beta\sum_{k=1}^{n}D_kuLu_k+\beta\sum_{i, k=1}^{n}\tilde{u}^{ii}(D_{ik}u)^2+\gamma L\varphi.
\end{split}
\end{equation}

Next, we estimate each term of \eqref{expression of Lv}. From now on, all calculations are made at the maximum point $\bar{x}$. We first consider the general case that $B$ depends on $p$, namely $B_p\not \equiv 0$. By calculations, we have
\begin{equation}\label{Leta}
\begin{split}
L\eta
= &\sum_{i=1}^n\tilde{u}^{ii}\left [D_{ii}w-\tilde{u}_{ii}-A_{ii}(x, u, Du)-\sum_{k=1}^n(D_{p_k}A_{ii}(x, u, Du))D_k\eta\right ]- \sum_{k=1}^n\tilde B_{p_k}D_k\eta\\
\ge & -n+\sum_{i=1}^n\tilde{u}^{ii}\left [A_{ii}(x, u, Dw)-A_{ii}(x, u, Du)-\sum_{k=1}^n(D_{p_k}A_{ii}(x, u, Du))D_k\eta\right ]- \sum_{k=1}^n\tilde B_{p_k}D_k\eta\\
\ge & -n-CB^{-\frac{1}{2(n-1)}}+\frac{1}{2}\sum_{i,k,l=1}^n\tilde{u}^{ii}A_{ii, kl}(x,u,\bar{p})D_k\eta D_l\eta\\
\ge & -n-CB^{-\frac{1}{2(n-1)}}-\frac{1}{2}\mu_0^-\sum_{i=1}^n \tilde{u}^{ii}(D_i\eta)^2,
\end{split}
\end{equation}
for $\bar{p}=(1-\theta)Du+\theta Dw$ and $\theta\in(0,1)$, where $D_{ii}w-A_{ii}(x, u, Dw)\geq 0$ is used to obtain the first inequality, Taylor's formula and \eqref{B_i over B} are used to obtain the second inequality, the A3w$^+$ condition is used to obtain the third inequality, $\mu_0^-=-\min \{\mu_0, 0\}$ and $\mu_0$ is the constant in \eqref{A3w+}.
Using the Cauchy's inequality, it follows from \eqref{Leta} that
\begin{equation}\label{important eta est}
\begin{split}
\frac{\alpha}{\eta}L\eta
\ge & -\alpha \left [\frac{n}{\eta}+\frac{C}{\eta}B^{-\frac{1}{2(n-1)}}+\frac{\mu_0^-}{2\eta}\sum_{i=1}^n\tilde{u}^{ii}(D_i\eta)^2 \right ] \\
\ge & -  \frac{ \alpha n}{\eta}-\frac{\alpha^2C}{\eta^2} - C B^{-\frac{1}{n-1}} - \frac{\alpha \mu_0^-}{2} \sum_{i=1}^n\tilde{u}^{ii}\frac{(D_i\eta)^2}{\eta^2},
\end{split}
\end{equation}
where we have assumed $\eta(\bar x)\in (0,1]$. We will show the trivial case when $\eta(\bar x)>1$ at the end of the proof.

In order to estimate $\frac{1}{\tilde{u}_{11}}L\tilde{u}_{11}$, we first calculate $Lu_{11}.$ We can assume $\tilde{u}_{11} \geq 1$, otherwise we are done. By a direct computation and using \eqref{Differentiation of the equation twice} with $\xi=e_1$, we have
\begin{equation}\label{Lu11}
\begin{split}
Lu_{11}&\ge \sum_{i=1}^n\tilde{u}^{ii}\tilde{u}^{jj}(D_1\tilde{u}_{ij})^2+\sum_{i, k, l=1}^n\tilde{u}^{ii}A_{ii, kl}u_{k1}u_{l1}
+D_{11}\tilde{B} - \tilde B_{p_k}D_ku_{11}-C\sum_{i, j=1}^n[(1+\tilde{u}_{jj})\tilde{u}^{ii}]\\
&\ge \sum_{i=1}^n\tilde{u}^{ii}\tilde{u}^{jj}(D_1\tilde{u}_{ij})^2-C\sum_{i=1}^n\tilde{u}_{ii}
+D_{11}\tilde{B} - \tilde B_{p_k}D_ku_{11}-C\sum_{i, j=1}^n[(1+\tilde{u}_{jj})\tilde{u}^{ii}],
\end{split}
\end{equation}
where the A3w$^+$ condition is used to obtain the second inequality. With the help of  \eqref{D_ijtildeB} in Corollary \ref{Coro2.1}, we can further get
\begin{equation}\label{result of Lu11}
\begin{split}
Lu_{11}\geq & \sum_{i, j=1}^n\tilde{u}^{ii}\tilde{u}^{jj}(D_1\tilde{u}_{ij})^2-C\sum_{i=1}^n \tilde{u}_{ii}-C\sum_{i, j=1}^n[(1+\tilde{u}_{jj})\tilde{u}^{ii}] \\
                    & -C(1+\tilde{u}_{11}) B^{-\frac{1}{n-1}}-C(1+\tilde{u}_{11})^2\\
\geq & \sum_{i, j=1}^n\tilde{u}^{ii}\tilde{u}^{jj}(D_1\tilde{u}_{ij})^2-C\sum_{i, j=1}^n\tilde{u}_{jj}\tilde{u}^{ii} -C\tilde{u}_{11}B^{-\frac{1}{n-1}}-C\tilde{u}_{11}^2,
\end{split}
\end{equation}
where we assume $\tilde u_{11}\ge 1$ and $\sum\limits_{i=1}^n \tilde u^{ii} \ge 1$ to obtain the second inequality.
Note that the third order term $-\tilde B_{p_k}D_ku_{11}$ in \eqref{Lu11} is eliminated by the last term of \eqref{D_ijtildeB}.
Next, we calculate $LA_{11}$. Using the definition of $L$, $\tilde{u}_{ij}=u_{ij}-A_{ij}$ and the $C^2$ smoothness of $A$, we obtain
\begin{equation}\label{LA11}
\begin{split}
LA_{11}&\leq C+C\sum_{i, j=1}^n[(1+\tilde{u}_{jj})\tilde{u}^{ii}]+\sum_{i, j, k, l=1}^n\tilde{u}^{ij}D_{p_kp_l}A_{11}\tilde{u}_{ki}\tilde{u}_{kj}+D_k\tilde{B}\\
&\leq C\sum_{i, j=1}^n[(1+\tilde{u}_{jj})\tilde{u}^{ii}+\tilde{u}_{jj}]+D_k\tilde{B}\\
&\leq C\sum_{i, j=1}^n[(1+\tilde{u}_{jj})\tilde{u}^{ii}]+D_k\tilde{B}\\
&\leq C\sum_{i, j=1}^n\tilde{u}_{jj}\tilde{u}^{ii}+C\tilde{u}_{11}B^{-\frac{1}{n-1}},
\end{split}
\end{equation}
where we again assume $\tilde u_{11}\ge 1$ and $\sum\limits_{i=1}^n \tilde u^{ii} \ge 1$.
Recalling $\tilde u_{11}=u_{11}-A_{11}$, we get from \eqref{result of Lu11} and \eqref{LA11} that
\begin{equation}\label{3.19}
L\tilde{u}_{11}\ge \sum_{i, j=1}^n\tilde{u}^{ii}\tilde{u}^{jj}(D_1\tilde{u}_{ij})^2-C\sum_{i, j=1}^n\tilde{u}_{jj}\tilde{u}^{ii}-C\tilde{u}_{11}B^{-\frac{1}{n-1}}-C\tilde{u}_{11}^2.
\end{equation}
Therefore, we have
\begin{equation}\label{3.19a}
\frac{1}{\tilde{u}_{11}}L\tilde{u}_{11}
\geq\frac{1}{\tilde{u}_{11}}\sum_{i, j=1}^n\tilde{u}^{ii}\tilde{u}^{jj}(D_1\tilde{u}_{ij})^2-C\sum_{i=1}^n (\tilde u^{ii}+\tilde u_{ii})-CB^{-\frac{1}{n-1}}.
\end{equation}
Choosing $\xi=e_k$ in \eqref{Differentiation of the equation once}, we have
\begin{equation}\label{Luk}
\begin{split}
Lu_k&=\sum_{i=1}^n\tilde{u}^{ii}\left [D_{ii}u_{k}-\sum_{l=1}^n(D_{p_l}A_{ii})D_lu_k\right ]-\sum_{l=1}^n\frac{B_{p_l}}{B}u_{lk}\\
&=\sum_{i=1}^n\tilde{u}^{ii}D_kA_{ii}+\sum_{i=1}^n\tilde{u}^{ii}(D_uA_{ij})u_k+\frac{B_k}{B}+\frac{B_z}{B}u_k,
\end{split}
\end{equation}
for $k=1,\cdots, n$. Hence, we have
\begin{equation}\label{3.21}
\beta \sum_{k=1}^nD_kuLu_k \geq -\beta C\sum_{i=1}^n\tilde{u}^{ii}-\beta CB^{-\frac{1}{n-1}}.
\end{equation}
By a direct calculation, we have
\begin{equation}\label{3.22}
\begin{split}
\beta \sum_{i, k=1}^n\tilde{u}^{ii}(D_{ik}u)^2&=\beta\sum_{i=1}^n\tilde{u}^{ii}(\tilde{u}_{ii}+A_{ii})^2+\beta\sum_{k\neq i}\tilde{u}^{ii}A_{ik}^2\\
&\geq \beta \sum_{i=1}^n\tilde{u}_{ii}-\beta C\sum_{i=1}^n\tilde{u}^{ii}.
\end{split}
\end{equation}
From the barrier inequality \eqref{both barriers} in both cases (i) and (ii), we can also have
\begin{equation}\label{both barriers'}
\gamma L \varphi \ge \frac{1}{2}\varepsilon_1 \gamma \sum_{i=1}^n \tilde u^{ii},
\end{equation}
by assuming $\sum\limits_{i=1}^n\tilde{u}^{ii}\geq \frac{2C}{\varepsilon_1}$. Now choosing $\alpha\ge 1$ and $\beta \ge 1$ and inserting \eqref{important eta est}, \eqref{3.19a}, \eqref{3.21}, \eqref{3.22} and \eqref{both barriers'} into \eqref{expression of Lv}, we obtain
\begin{equation}\label{3.27}
\begin{split}
0 \geq & - \frac{\alpha^2C}{\eta^2} - \beta C B^{-\frac{1}{n-1}} + \left (\frac{1}{2}\gamma\varepsilon_1-\beta C\right )\sum_{i=1}^n\tilde{u}^{ii}+(\beta-C)\sum_{i=1}^n\tilde{u}_{ii}\\
&-\alpha C\sum_{i=1}^n\tilde{u}^{ii}\frac{(D_i\eta)^2}{\eta^2}+\frac{1}{\tilde{u}_{11}}\sum_{i,j=1}^n\tilde{u}^{ii}\tilde{u}^{jj}(D_1\tilde{u}_{ij})^2-\frac{1}{\tilde{u}_{11}^2}\sum_{i=1}^n\tilde{u}^{ii}(D_i\tilde{u}_{11})^2.
\end{split}
\end{equation}
Splitting $\sum_{i=1}^n\tilde{u}^{ii}\frac{(D_i\eta)^2}{\eta^2}$ into two parts, we have
\begin{equation}\label{splitting}
\sum_{i=1}^n\tilde{u}^{ii}\frac{(D_i\eta)^2}{\eta^2}=\frac{(D_1\eta)^2}{\eta^2\tilde{u}_{11}}+\sum_{i=2}^n\tilde{u}^{ii}\frac{(D_i\eta)^2}{\eta^2}.
\end{equation}
Observing that the first term on the right hand side of \eqref{splitting} can be absorbed by the first term on the right hand side of \eqref{3.27}, we only need to estimate the last term in \eqref{splitting}. From \eqref{Dv} and \eqref{first derivative of v}, we have
\begin{equation}\label{derivative of eta}
\begin{split}
\alpha C\sum_{i=2}^n\tilde{u}^{ii} \frac{(D_i\eta)^2}{\eta^2}&=\alpha C\sum_{i=2}^n\tilde{u}^{ii} \left \{\frac{1}{\alpha^2}\left [\frac{D_i\tilde{u}_{11}}{\tilde{u}_{11}}+\beta D_ku(\tilde{u}_{ki}-A_{ki})+\gamma D_i\varphi  \right ]^2\right \} \\
&\leq \frac{C}{\alpha} \sum_{i=2}^n\tilde{u}^{ii} \left \{\left (\frac{D_i\tilde{u}_{11}}{\tilde{u}_{11}}\right )^2+\beta^2(\tilde{u}_{ii}^2+1)+\gamma^2 ( D_i\varphi )^2\right \} \\
&\leq \frac{1}{2\tilde{u}_{11}^2}\sum_{i=2}^n \tilde{u}^{ii}  (D_i\tilde{u}_{11})^2+\sum_{i=1}^n(\tilde{u}_{ii}+\tilde{u}^{ii}),
\end{split}
\end{equation}
where we choose $\alpha=(\beta^2+\gamma^2+2)C$. Thus, from \eqref{3.27}, \eqref{splitting} and \eqref{derivative of eta}, we have
\begin{equation}\label{before Pog}
\begin{split}
0 \geq & - \frac{\alpha^2C}{\eta^2} - \beta C B^{-\frac{1}{n-1}} +\left (\frac{1}{2}\gamma\varepsilon_1-\beta C\right )\sum_{i=1}^n\tilde{u}^{ii}+(\beta-C)\sum_{i=1}^n\tilde{u}_{ii}\\
&-\frac{1}{2\tilde{u}_{11}^2}\sum_{i=2}^n \tilde{u}^{ii}  (D_i\tilde{u}_{11})^2+\frac{1}{\tilde{u}_{11}}\sum_{i,j=1}^n\tilde{u}^{ii}\tilde{u}^{jj}(D_1\tilde{u}_{ij})^2-\frac{1}{\tilde{u}_{11}^2}\sum_{i=1}^n\tilde{u}^{ii}(D_i\tilde{u}_{11})^2.
\end{split}
\end{equation}
Using the Pogorelov term $\frac{1}{\tilde{u}_{11}}\sum_{i,j=1}^n\tilde{u}^{ii}\tilde{u}^{jj}(D_1\tilde{u}_{ij})^2$, we have
\begin{equation}\label{3.28}
\begin{split}
&-\frac{1}{2\tilde{u}_{11}^2}\sum_{i=2}^n \tilde{u}^{ii}  (D_i\tilde{u}_{11})^2+\frac{1}{\tilde{u}_{11}}\sum_{i,j=1}^n\tilde{u}^{ii}\tilde{u}^{jj}(D_1\tilde{u}_{ij})^2-\frac{1}{\tilde{u}_{11}^2}\sum_{i=1}^n\tilde{u}^{ii}(D_i\tilde{u}_{11})^2 \\
= &\sum_{i,j=1}^n\frac{1}{\tilde{u}_{11}}\tilde{u}^{ii}\tilde{u}^{jj}(D_1\tilde{u}_{ij})^2-\frac{1}{\tilde{u}_{11}^2}\tilde{u}^{11}(D_1\tilde{u}_{11})^2
-\frac{3}{2\tilde{u}_{11}^2}\sum_{i=2}^n\tilde{u}^{ii}(D_i\tilde{u}_{11})^2\\
\ge &\frac{1}{2\tilde{u}_{11}^2}\sum_{i=2}^n\tilde{u}^{ii}(D_i\tilde{u}_{11})^2+\frac{2}{\tilde{u}_{11}^2}\sum_{i=2}^n\tilde{u}^{ii}[(D_1\tilde{u}_{1i})^2
-(D_i\tilde{u}_{11})^2]\\
\ge &\frac{1}{2\tilde{u}_{11}^2}\sum_{i=2}^n\tilde{u}^{ii}(D_i\tilde{u}_{11})^2
+\frac{2}{\tilde{u}_{11}^2}\sum_{i=2}^n\tilde{u}^{ii}(D_iA_{11}-D_1A_{1i})(2D_i\tilde{u}_{11}+D_iA_{11}-D_1A_{1i})\\
\ge & -\frac{C}{\tilde{u}_{11}^2}\sum_{i=1}^n\tilde{u}^{ii} \ge -C\sum_{i=1}^n\tilde{u}^{ii},
\end{split}
\end{equation}
where Cauchy's inequality is used in the second last inequality.
Therefore, from \eqref{before Pog} and \eqref{3.28}, we have
\begin{equation}\label{nearly final}
0 \geq  - \frac{\alpha^2C}{\eta^2} - \beta C B^{-\frac{1}{n-1}} + \left (\frac{1}{2}\gamma\varepsilon_1-\beta C\right )\sum_{i=1}^n\tilde{u}^{ii}+(\beta-C)\sum_{i=1}^n\tilde{u}_{ii}.
\end{equation}
By using the key relationship \eqref{Buii} between $B^{-\frac{1}{n-1}}$ and $\sum_{i=1}^n\tilde{u}^{ii}$, we have from  \eqref{nearly final} that
\begin{equation}\label{final}
0 \geq  - \frac{\alpha^2C}{\eta^2} + \left (\frac{1}{2}\gamma\varepsilon_1-\beta C\right )\sum_{i=1}^n\tilde{u}^{ii}+(\beta-C)\sum_{i=1}^n\tilde{u}_{ii}.
\end{equation}
By choosing $\beta=C+1$ and $\gamma=\frac{2\beta C}{\varepsilon_1}$, \eqref{final} becomes
\begin{equation}\label{final'}
0 \geq  - \frac{\alpha^2C}{\eta^2} + \sum_{i=1}^n\tilde{u}_{ii} \geq  - \frac{\alpha^2C}{\eta^2} + \tilde{u}_{11},
\end{equation}
which leads to
\begin{equation}\label{3.30}
\eta^2\tilde{u}_{11}(\bar{x})\leq \alpha^2 C.
\end{equation}
We then immediately get the conclusion \eqref{pOGORELOV ESTIMATES} in the $B_p \not \equiv 0$ case.

For the case $B_p \equiv 0$ or more general case $\frac{|B_p|}{B}\le C$, \eqref{important eta est} can be replaced by 
\begin{equation}
\frac{\alpha}{\eta}L\eta
\ge -  \frac{ \alpha C}{\eta}- C^\prime B^{-\frac{1}{n-1}} - \frac{\alpha \mu_0}{2} \sum_{i=1}^n\tilde{u}^{ii}\frac{(D_i\eta)^2}{\eta^2},
\end{equation}
(the constant $C^\prime=0$ when $B_p \equiv 0$), and correspondingly, \eqref{final'} can be replaced by
\begin{equation}\label{final''}
0 \geq  - \frac{\alpha C}{\eta} + \tilde{u}_{11},
\end{equation}
which leads to
\begin{equation}\label{3.30}
\eta \tilde{u}_{11}(\bar{x})\leq \alpha C.
\end{equation}
We then immediately get the conclusion \eqref{pOGORELOV ESTIMATES} in the $B_p \equiv 0$ case or more general $\frac{|B_p|}{B}\le C$ case.
Then Theorem \ref{Th1.3} is proved provided $\eta(\bar{x}) \in (0,1]$. 

While if $\eta(\bar{x}) > 1$, \eqref{important eta est} still holds. Furthermore, $\eta$ in the denominators on the right hand side of \eqref{important eta est} can be replaced by $1$. Following the above proof, we can have
\begin{equation}\label{trivial}
\tilde{u}_{11}(\bar{x})\leq C,
\end{equation}
which also leads to the conclusion \eqref{pOGORELOV ESTIMATES}.

We now complete the proof of Theorem \ref{Th1.3}.
\end{proof}

\begin{remark}\label{Rm4.1}
In the above proof, the A3w$^+$ condition is crucial in the critical inequality \eqref{Leta}, which is the reason why we restrict our study in the class of $A$ satisfying A3w$^+$. Alternative conditions to get through the inequality \eqref{Leta} can be found in (2.4), Remark 2.1 and Remark 2.2 in \cite{LTW}.
Note that the inequality \eqref{Lu11}, which is deduced from the A3w$^+$ condition, can also be derived by just using the A3w condition and some other conditions, see \cite{LT-Pog, LTW}.
\end{remark}

We are now ready to prove Theorem \ref{Th1.4}.

\begin{proof}[The proof of Theorem \ref{Th1.4}.]
Let $\Omega_j$ be a sequence of $C^{\infty}$ bounded domains such that $\Omega_j\rightarrow \Omega$ as $j\rightarrow\infty$. 
Note that if in case (i), these domains also need to satisfy the $A$-boundedness condition.
We can find $B_j\in C^{\infty}$ such that $B_j > 0$, $B_j$ tends uniformly to $B$ in $\Omega$ and $\|B_j^{\frac{1}{n-1}}\|_{C^{1,1}(\bar{\Omega}_j \times \mathbb{R} \times \mathbb{R}^n)} \leqslant C$ for some uniform constant $C$, (independent of $j$). From the existence result in \cite{JTY}, the Dirichlet problem $\det (M[u_j])=B_j$ in $\Omega_j$, $u_j=w$ on $\partial\Omega_j$, has a unique classical solution $u_j\in C^3(\bar \Omega_j)$.

Since $A$ and $B$ are nondecreasing in $z$, from the strong maximum principle, either $u\equiv w$ in $\Omega$ or $u<w$ in $\Omega$. In the former case, since $w\in C^{1,1}(\Omega)$, we immediately have $u\in C^{1,1}(\Omega)$. Next, we only consider the latter case when $u<w$ in $\Omega$.
Since $u_j$ is a degenerate elliptic solution, we can have the uniform gradient estimate from \cite{JXX}.
By applying the Pogorelov type estimate \eqref{pOGORELOV ESTIMATES} in the domain $\{u_j < w-\varepsilon\}$ for any fixed small constant $\varepsilon>0$, we have
\begin{equation}
(w-u_j-\varepsilon)^\tau |D^2u_j|\leq C, \quad {\rm in} \ \{u_j < w-\varepsilon\},
\end{equation}
where the constant $C$ is independent of $j$.
Thus, we have
\begin{equation}
|D^2u_j|\leq C, \quad {\rm in} \ \{u_j <  w-2\varepsilon\},
\end{equation}
where the constant $C$ is independent of $j$.
From the stability property of viscosity solutions \cite{CIL}, we have $u_j \rightarrow u$ as $j\rightarrow \infty$, and
\begin{equation}\label{C11estimate}
u\in C^{1,1}(\{u < w-2\varepsilon\}),
\end{equation}
for any fixed small constant $\varepsilon>0$. Since the domain $\{u < w-2\varepsilon\}$ tends to $\Omega=\{u < w\}$ as $\varepsilon$ to $0$, from \eqref{C11estimate}, we finally get $u\in C^{1,1}(\Omega)$.
\end{proof}

\vspace{3mm}

\bibliographystyle{amsplain}

\end{document}